\documentclass[english]{amsart}%

\usepackage{amsmath}    
\usepackage{amssymb}    
\usepackage{amsthm}
\usepackage{amscd}
\usepackage{bbm}        
\usepackage{babel}
\usepackage{paralist}

\newcommand{\R}{\mathbbm{R}}
\newcommand{\N}{\mathbbm{N}}
\newcommand{\C}{\mathbbm{C}}

\newcommand{\Rn}{{\mathbbm{R}^n}}

\newcommand{\Sp}{\mathrm{Sp}}

\newcommand{\vol}{\mathrm{vol}}

\DeclareMathOperator*{\res}{\mathrm{res}}

\newcommand{\re}{\mathrm{Re}\,}

\def\dbar{{\mathchar'26\mkern-12mu d}}

\theoremstyle{plain} \newtheorem{theorem}{Theorem}[section]
\theoremstyle{definition} 
\theoremstyle{plain} 
\theoremstyle{plain} 
\theoremstyle{plain} \newtheorem{corollary}[theorem]{Corollary}
\theoremstyle{definition} \newtheorem{definition}[theorem]{Definition}
\theoremstyle{plain} \newtheorem{proposition}[theorem]{Proposition}
\theoremstyle{remark} \newtheorem{remark}[theorem]{Remark}
\theoremstyle{definition} \newtheorem{hypothesis}[theorem]{Hypothesis}
\theoremstyle{definition} \newtheorem{example}[theorem]{Example}
\theoremstyle{plain}

\begin{document}
\title[Heat trace asymptotics in Euclidean space]{Heat trace asymptotics of\\subordinate Brownian motion\\in Euclidean space}

\author{M.A.~Fahrenwaldt}
\address{Institut f\"ur Mathematische Stochastik, Leibniz Universit\"at Hannover, Welfengarten 1, 30167 Hannover, Germany}
\email{fahrenw@stochastik.uni-hannover.de}

\begin{abstract} 
We derive the heat trace asymptotics of the generator of subordinate Brownian motion on Euclidean space for a class of Laplace exponents. The terms in the asymptotic expansion can be computed to arbitrary order and depend both on the geometry of Euclidean space and the short-time behaviour of the process. If the Blumenthal-Getoor index of the process is rational, then the asymptotics may contain logarithmic terms. The key assumption is the existence of a suitable density for the L\'evy measure of the subordinator. The analysis is highly explicit.
\end{abstract}

\keywords{Subordinate Brownian motion, pseudodifferential operators, noncommutative residue, heat trace}
\maketitle

\section{Introduction}

This paper explores the correspondence between stochastic processes on $\R^n$ and analytical objects: we pick a subordinator $X_t$, i.e. an increasing process with independent and homogeneous increments, and let $B_{X_t}$ be the subordinate Brownian motion. It is obtained from a standard Brownian motion by introducing a local time given by the subordinator. Let $A$ be the generator of the corresponding semigroup. We show that in the heat trace asymptotics of $A$ we recover both geometric information about $\R^n$ and probabilistic information about $B_{X_t}$ and $X_t$. This is illustrated schematically in the following diagram and will be made precise in the next section. 

\[
\begin{CD}
\textit{Stochastics} @. \textit{Analysis} \\
\text{Process on } \R^n @. \text{Heat kernel} \\
B_{X_t} @= e^{- A t} \\
@VVV @VVV \\
\underbrace{
 t^{-1/\beta}\sup_{s \leq t} \left|B_{X_s}\right| \stackrel{a.s.}{\to} \left\{
\begin{matrix}
0, & \beta > 2 \alpha \\
\infty, & \beta < 2 \alpha  
\end{matrix} \right.
}_\text{as $t \to 0^+$ with Blumenthal-Getoor index $2 \alpha$} @>>> \underbrace{
TR\left(e^{- A t}\right) \sim 
\begin{matrix} 
e^{ct} \! \sum\limits_{k=0}^\infty c_k(\alpha) t^{(n-k)/2 \alpha} \\
-e^{ct} \! \sum\limits_{k=1}^\infty \tilde{c}_k(\alpha) t^k \log t
\end{matrix}
}_{\text{heat trace asymptotics for $t \to 0^+$,} \atop \text{log terms only for $\alpha$ rational}}
\end{CD}
\]

Three features differentiate our results from the literature. First, we consider processes that live in Euclidean space and not in compact domains. Second, we observe that the heat trace asymptotics strongly depend on the (ir)rationality of a parameter that governs the short-term behaviour of the process. Third, our method allows to compute the asymptotics to arbitrary order. 

We consider a class of subordinators that is small enough to allow a fully tractable analysis yet large enough to show interesting behaviour. Roughly speaking, we work with subordinators whose Laplace exponent in its L\'evy-Khintchin form has a density with suitable asymptotic expansion near the origin and is of rapid decay at infinity. This class contains the relativistic stable process, which is important in applications in financial mathematics or quantum physics. We present this as a fully worked example in dimensions 2 and 3 demonstrating that the latter situation leads to logarithmic terms in the heat trace asymptotics.

The essential technical features of our approach are the use of a global calculus of classical pseudodifferential operators on $\R^n$ and a generalized trace functional on this algebra. The trace functional allows us to go beyond compact domains and consider processes with values in the whole of $\R^n$.

More precisely, for our class of subordinators the suitably shifted generator $\tilde A=A-c I$ belongs to the algebra of classical $SG$-pseudodifferential operators on $\R^n$ \cite{egorov1997pseudo} \cite{maniccia2006complex} and so do its complex powers $\tilde A^{-z}$ and the heat operator $e^{-\tilde A t}$. Using a generalized trace functional $TR$ analogous to \cite{maniccia2013determinants} on a suitable subalgebra we explicitly compute the regularized zeta function $\zeta=TR(\tilde A^{-z})$ and the generalized heat trace $TR(e^{-\tilde A t})$. The asymptotics of the latter can be expressed in terms of the pole structure of the regularized zeta function. The trace functional $TR$ is the analogue of the Kontsevich-Vishik trace \cite{kontsevich1994determinants} \cite{kontsevich1995geometry} on closed manifolds which is used to study determinants of elliptic operators. These play an important role in many areas of mathematics and quantum physics, cf. \cite{scott2010traces} for comprehensive references. 

In a broader perspective, we argue similarly to noncommutative geometry \cite{connes1994noncommutative}: given a stochastic process we try to infer probabilistic information from the spectrum of a strategically associated operator. 

Closely related to this investigation are \cite{banuelos2014two} \cite{park2014trace} that compute several terms in the heat trace expansion for the relativistic $\alpha$-stable processes on a compact domain in Euclidean space. We comment on this in Section \ref{sec_example} and also show that our results agree with \cite{banuelos2014two} and \cite{park2014trace}, cf. Remark \ref{rem_banuelos_trace}. The case of subordinate Brownian motion on closed manifolds is covered in \cite{applebaum2011infinitely} \cite{applebaum2014probability} \cite{banuelos2013trace} \cite{banuelos2008trace}.   

The use of pseudodifferential operators to understand Feller processes is the theme of the comprehensive series \cite{jacob2001pseudo} \cite{jacob2002pseudo} \cite{jacob2005pseudo}. Building on the seminal work of Hoh-Jacob-Schilling, the author describes a symbolic calculus for a class of pseudodifferential operators that appear naturally as generators of certain Markov processes. These operators have a more intricate structure than the pseudodifferential operators used in index theory yet allow for parametrices and the Fredholm property. Also, the Ruzhansky-Turunen theory of pseudodifferential operators on Lie groups has found successful application in Markov processes as developed by Applebaum \cite{applebaum2011pseudo}. Either calculus appears, however, not suitable for our purposes due to the lack of a homogeneous symbol expansion. 

This paper is organized as follows. The following section recalls the basic notation for symbols of pseudodifferential operators. Section \ref{sec_key_results} contains the statements of our key results with proofs given in Section \ref{sec_proofs}. Section \ref{sec_example} contains a fully worked example also illustrating several calculations from the proofs.

\section{Symbol spaces} \label{sec_notation}
We briefly recall the symbol spaces of the theory of $SG$-pseudodifferential operators on $\R^n$, the reader is referred to \cite{cordes1995technique} \cite{egorov1997pseudo} \cite{maniccia2006complex} \cite{maniccia2013determinants} \cite{nicola2010global} for a more detailed treatment. For the notation to be consistent in this paper we need to adjust some of the standard notation in pseudodifferential operators.

We define the smooth weight function $\langle x \rangle = (1+|x|^2)^{1/2}$ and introduce the partial differential operators $\partial_j = \frac{\partial}{\partial x_j}$. For a multi-index $\beta=(\beta_1, \ldots, \beta_n) \in \N_0^n$ we set $\partial_x^\beta = \partial_1^{\beta_1} \cdots \partial_n^{\beta_n}$.

Denote by $C^\infty(\R^n)$ the set of smooth functions $\R^n \to \C$ and let $\mathcal S(\R^n)$ be the {\em Schwartz space} of rapidly decaying functions, i.e. $u \in C^\infty(\R^n)$ such that
\[
\sup_{x \in \R^n} \left| x^\gamma \partial_x^\beta u(x)\right| < \infty
\]
for all $\beta, \gamma \in \N_0^n$. Denote by $\mathcal S'(\R^n)$ the topological dual of the Schwartz space, called {\em temperate distributions}.

As usual, let $L^2(\R^n)$ be the Hilbert space $L^2(\R^n)=\left\{u \in \mathcal S'(\R^n) \middle| (u, u) < \infty\right\}$ with inner product $(u,v)= \int u(x) \overline{v(x)}dx$ and norm $||u||_{L^2(\R^n)}^2=(u,u)$.

Define the Fourier transform $\hat u(\xi)$ of a function $u \in \mathcal S(\R^n)$ by
\[
\hat u(\xi) = \int e^{i x \cdot \xi} u(x) \dbar x,
\]
where $\dbar x = (2 \pi)^{-n/2}dx$ and $\cdot$ denotes the standard inner product in $\R^n$. The Fourier transform yields an isomorphism of $\mathcal S(\R^n)$ extending to an isomorphism of $\mathcal S'(\R^n)$.
 
We now introduce the symbol class $S^{r, r'}(\R^n)$.

\begin{definition}[\cite{maniccia2006complex}, Definition 2.1] \label{def_symbol}
The space $S^{r, r'}(\R^n)$ of \emph{symbols} of order $(r, r') \in \C^2$ is the set of smooth functions $\sigma: \R^n \times \R^n \to \C$ such that for all multi-indices $\beta, \gamma \in \N^n_0$ there is a constant $C_{\beta, \gamma}$ with
\[
\left| \partial_\xi^\beta \partial_x^\gamma \sigma(x, \xi) \right| \leq C_{\beta, \gamma} \langle x \rangle^{\re r'-|\gamma|} \langle \xi \rangle^{\re r-|\beta|} 
\]
for any $(x, \xi) \in \Rn \times \Rn$. We set $S^{-\infty, r'}(\R^n)=\cap_{r \in \R} S^{r, r'}(\R^n)$.
\end{definition}

The pseudodifferential operator $A$ with symbol $\sigma \in S^{r, r'}(\R^n)$ is given by
\begin{equation} \label{def_pseudo}
Au (x) = \int e^{ix \cdot \xi} \sigma(x, \xi) \hat u(\xi) \dbar x
\end{equation}
mapping $\mathcal S(\R^n) \to \mathcal S(\R^n)$ and $\mathcal S(\R^n)' \to \mathcal S(\R^n)'$. 

For symbols independent of $x$ we recall symbol expansions and classicality of pseudodifferential operators. The classical symbols have asymptotic expansions in $\xi$ into homogeneous terms, we refer to Definition 2.2 of \cite{maniccia2006complex} for the general situation. 

\begin{definition} \label{def_classical_expansion}
Let $\sigma(\xi) \in S^{r,0}(\R^n)$ be a symbol independent of $x$.
\begin{enumerate}[(i)]
\item We say that it has the asymptotic expansion
\[
\sigma(\xi) \sim \sum_{k=0}^\infty \sigma_{r-k}(\xi)
\]
where $\sigma_{r-k} \in C^\infty(\R^n \setminus \{0\})$ if the following holds. For a cutoff function $\chi \in C^\infty(\R^n)$ equal to 1 outside $|\xi| \geq 1$ and equal to zero for $|\xi| \leq 1/2$ we have
\begin{equation*} 
\sigma(\xi) - \sum_{k=0}^{N-1} \chi(\xi) \sigma_{r-k}(\xi) \in S^{r-N, r'}(\R^n)
\end{equation*}
for any $N \geq 1$. 
\item The symbol is \emph{classical} if the $\sigma_{r-k}$ are homogeneous in $\xi$, i.e. if $\sigma_{r-k}(\tau \xi)= \tau^{r-k} \sigma_{r-k}(\xi)$ for $\tau > 0$.
The set of classical symbols is denoted by $S^{r,0}_{cl}(\R^n)$.
\end{enumerate}
\end{definition}

We also recall the notion of ellipticity and parameter-dependent ellipticity in the context of symbols that are independent of $x$. We refer to \cite{maniccia2006complex}, Definition 2.3 and \cite{maniccia2013determinants}, A.3, respectively, for the full generality of the definitions.

\begin{definition} \label{def_elliptic_parameter_elliptic}
Let $\sigma(\xi) \in S^{r, 0}(\R^n)$ be a symbol independent of $x$ with order $r>0$.

\begin{enumerate}[(i)]
\item \label{def_elliptic}
We say that $\sigma$ is \emph{elliptic} if there is a constant $C \geq 0$ such that
\[
C \langle \xi \rangle^r \leq \sigma(\xi)
\]
for all $\xi \in \R^n$.

\item \label{def_parameter_elliptic}
Fix $0<\theta<\pi$ and let $\Lambda=\{r e^{i \varphi} | r \geq 0 \textrm{ and } \theta \leq \varphi \leq 2\pi-\theta\}$ be a sector in the complex plane. We say that $\sigma$ is \emph{$\Lambda$-elliptic} if there is a constant $C \geq 0$ such that 
\begin{enumerate}[(a)]
\item $\sigma(\xi)$ does not take values in $\Lambda$ for all  $\xi \in \R^n$, and 
\item $|(\lambda-\sigma(\xi))^{-1}| \leq C \langle \xi \rangle^{-r}$ for $\lambda \in \Lambda$ and $\xi \in \R^n$.
\end{enumerate}

\end{enumerate}
\end{definition}

\section{Statement of the key results} \label{sec_key_results}
We state and motivate the assumptions and formulate the key results.\\

\textbf{Probabilistic prelude}, cf. \cite{bertoin1998levy} \cite{jacob2001pseudo}. Let $B_t$ be a Brownian motion on $\Rn$ with characteristic function $\mathbbm E(e^{i \xi \cdot B_t})=e^{-t |\xi|^2}$ for $\xi \in \R^n$ and $t>0$. Let $X_t$ be a subordinator on $[0, \infty)$ independent of $B_t$, i.e. an increasing L\'evy process with values in $[0, \infty)$ and $X_0=0$ almost surely. The distribution of $B_{X_t}$ can be described in terms of the characteristic function $\mathbbm E(e^{i \xi \cdot B_{X_t}})=e^{-t f(|\xi|^2)}$ for $t>0$, where $f$ is the \emph{Laplace exponent} in the probabilist's convention or the \emph{Bernstein function} in the analyst's vocabulary. Also the generating function of the subordinator is $\mathbbm E(e^{- \lambda X_t}) = e^{- t f(\lambda)}$ for $\lambda>0$. 

Recall (cf. \cite{schilling2012bernstein}, Definition 3.1) that a function $f: (0, \infty) \to \R$ is a \emph{Bernstein function} if $f$ is smooth, $f(\lambda) \geq 0$ and $(-1)^{k-1} f^{(k)}(\lambda) \geq 0$ for $k \in \N$.  Any Bernstein function can be represented in L\'evy-Khintchin form as
\begin{equation} \label{eq_Bernstein_general}
f(\lambda)=a + b \lambda + \int_0^\infty \left(1-e^{-\lambda t}\right) \mu(dt),
\end{equation}
for constants $a, b \geq 0$ and $\mu$ a measure on $(0,\infty)$ such that $\int_0^\infty t \wedge 1 \; \mu(dt) < \infty$. The L\'evy characteristic triplet $(a, b, \mu)$ uniquely determines $f$. 

We consider Bernstein functions whose L\'evy measure has a locally integrable density $m$ with respect to Lebesgue measure, and we call this the \emph{L\'evy density}. Moreover, we restrict ourselves to Bernstein functions of the form \eqref{eq_Bernstein_general} with $a=b=0$ but indicate in Section \ref{sec_proofs} how the general case differs.

One can canonically associate a semigroup $T_t$ with the process $B_{X_t}$ by defining $[T_t u](x)=\mathbbm E^x(u(B_{X_t}))$ acting on Schwartz functions $u \in \mathcal S(\Rn)$. The generator of this semigroup is defined as the operator $Au = \lim_{t \to 0} \frac{T_t u - t}{t}$ with domain the set of functions where this limit exists. The link with the probabilistic picture is that the generator acts as the integral operator
\begin{equation} \label{eq_psido_symbol_f}
A u(x) = - (2 \pi)^{-n/2} \int_{\Rn} e^{i x \cdot \xi} f\left(|\xi|^2\right) \hat{u}(\xi) d \xi,
\end{equation}
where $u \in \mathcal S(\Rn)$ and $\hat{u}$ denotes the Fourier transform of $u$. So if $f$ is smooth and its derivatives decay sufficiently fast, then $A$ is a pseudodifferential operator with symbol $-f(|\xi|^2)$. We sometimes call this the symbol of the subordinate process.

We recall the definition of asymptotic expansions of real-valued functions.
\begin{definition}
Suppose that $g: (0,\infty) \to \R$ is a function. We say that $g(t) \sim \sum_{k=0}^\infty p_k t^{a_k}$ as $t \to 0^+$ if $p_k \in \R, a_k \uparrow \infty$ and
\[
\lim_{t \to 0^+} t^{-a_N}\left(g(t)-\sum_{k=0}^{N} p_k t^{a_k}\right) = 0
\]
for every $N \geq 0$. Analogously for $t \to \infty$.
\end{definition}

The key assumption in this paper is the existence of a suitable L\'evy density.

\begin{hypothesis} \label{hypo_main_hypothesis}
Let $f(\lambda)=\int_0^\infty \left(1-e^{-\lambda t}\right) m(t) dt$ be a Bernstein function with locally integrable density $m: (0, \infty) \to \R$ that has the following properties.
\begin{enumerate}[(i)]
\item There is an $\alpha \in (0,1)$ such that $m$ has the asymptotic expansion 
\[
m(t) \sim t^{-1-\alpha} \sum_{k=0}^\infty p_k t^k
\]
as $t \to 0^+$. 
\item $m$ is of rapid decay at $\infty$, i.e. $m(t) t^\beta$ is bounded a.e. for $t>1$ for all $\beta \in \R$.
\item $\overline{m}(0, \infty)<0$ where $\overline{m}(0, \infty)=\int_0^\infty \left(m(t)-p_0 t^{-1-\alpha}\right) dt$.
\end{enumerate}
\end{hypothesis}

Assumption (i) yields an asymptotic expansion of $f$ for large $\lambda$ and assumption (ii) makes $f$ smooth at the origin as it implies that $\int_0^\infty t^l m(t) d t < \infty$ for any $l \in \N$. Moreover, (iii) is a technical condition ensuring $A-\overline{m}(0, \infty)$ with $A$ as in \eqref{eq_psido_symbol_f} is an invertible and classical pseudodifferential operator.

\begin{example}
From \cite{schilling2012bernstein} we pick five examples of Bernstein functions that satisfy Hypothesis \ref{hypo_main_hypothesis}. The asymptotics of the L\'evy densities can be obtained using Taylor's theorem. In each case, $\alpha \in (0,1)$ and $c>0$.

\begin{enumerate}[(i)]
\item $f(\lambda)=(\lambda+1)^\alpha-1$. This has L\'evy density $m(t)=\frac{\alpha}{\Gamma(1-\alpha)} e^{-t} t^{-\alpha-1}$ and the asymptotic expansion
\[
m(t) \sim \tfrac{\alpha}{\Gamma(1-\alpha)} t^{-1-\alpha}\left(1-t+\tfrac{1}{2}t^2+ \cdots\right)
\]
as $t \to 0^+$. This Bernstein function describes the relativistic $\alpha$-stable L\'evy processes, which is related to the relativistic Hamiltonian in physics, cf. \cite{albeverio2001analytic} and to the Normal Inverse Gaussian distribution used in financial mathematics, cf. \cite{barndorff1997processes}. All key results are illustrated for the case $\alpha=1/2$ in Section \ref{sec_example}. 

\item $f(\lambda)=\lambda / (\lambda+c)^\alpha$ with $m(t)=\tfrac{\sin(\alpha \pi) \Gamma(1-\alpha)}{\pi} e^{-at} t^{\alpha-2} (c t+1-\alpha)$ and asymptotics
\[
m(t) \sim \tfrac{\sin((1-\alpha') \pi) \Gamma(\alpha')}{\pi} t^{-1-\alpha'}\left(\alpha'+c(1-\alpha') t +c^2\left(\tfrac{1}{2} \alpha'^2-1\right)t^2+ \cdots\right)
\]
where $\alpha'=1-\alpha$. 
\item $f(\lambda)=\lambda\left(1-e^{-2 \sqrt{\lambda+c}}\right) / \sqrt{\lambda+c}$ and $m(t)=\frac{e^{-1/t-c t}\left(2+t(e^{1/t}-1)(1+2ct)\right)}{2 \sqrt{\pi} t^{5/2}}$. Then
\[
m(t) \sim \tfrac{1}{2 \sqrt{\pi}} t^{-1-1/2}\left(1+ct-\tfrac{3c^2}{2}t^2+\cdots\right).
\]

\item $f(\lambda)=\Gamma\left(\frac{\lambda+c}{2c}\right) / \Gamma\left(\frac{\lambda}{2c}\right)$ with $m(t)=\frac{c^{3/2} e^{2ct}}{2 \sqrt{\pi}(e^{2ct}-1)^{3/2}}$. Here, 
\[
m(t) \sim \tfrac{1}{\sqrt{32 \pi}} t^{-1-1/2}\left(1+\tfrac{1}{2}ct-\tfrac{1}{8}c^2t^2 + \cdots \right).
\]
\item $f(\lambda)=\Gamma(\alpha \lambda+1) / \Gamma(\alpha \lambda+1-\alpha)$. This has density $m(t)=\frac{e^{-t/\alpha}}{\Gamma(1-\alpha)(1-e^{-t/\alpha})^{1+\alpha}}$ with asymptotics
\[
m(t) \sim \tfrac{\alpha^{1+\alpha}}{\Gamma(1-\alpha)} t^{-1-\alpha}\left(1+\tfrac{\alpha-1}{2 \alpha}t + \tfrac{3 \alpha^2-7\alpha+2}{24 \alpha^2}t^2+\cdots\right).
\]
\end{enumerate}
All of the above densities are of exponential decay; we are not aware of densities that are of rapid but not exponential decay.  
\end{example}

\textbf{Sample path properties.} We next state the relationship between the asymptotic expansion of the L\'evy density and sample path properties of the subordinate Brownian motion $B_{X_t}$ and the subordinator $X_t$. 

A link with pathwise properties of the subordinate Brownian motion can be established via the Blumenthal-Getoor index that concerns the short-time behaviour of $B_{X_t}$. This result is a simple consequence of \cite{blumenthal1961sample} \cite{pruitt1981growth} \cite{schilling1998growth}.

\begin{theorem} \label{thm_short_long_time}
Let $B_{X_t}$ be the subordinate Brownian motion with subordinator $X_t$ whose Bernstein function satisfies Hypothesis \ref{hypo_main_hypothesis}. Then 
\[
\lim_{t \to 0} \; t^{-1/\beta} \sup_{s \leq t} \left| B_{X_s} \right| = 
\left\{ \begin{matrix}
0 & \text{for all } \beta> 2 \alpha \\
\infty & \text{for all } \beta < 2 \alpha,
\end{matrix} \right.
\]
with probability one.
\end{theorem}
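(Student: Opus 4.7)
The plan is to identify the Blumenthal-Getoor index of the subordinate process $B_{X_t}$ as exactly $2\alpha$ and then invoke a standard growth theorem for Lévy processes. The statement of the theorem is effectively just a special case of the Blumenthal-Getoor-Pruitt-Schilling characterization of short-time growth rates in terms of this index, so almost all the work consists in translating Hypothesis \ref{hypo_main_hypothesis} into a statement about the symbol $f(|\xi|^2)$.

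First I would show that under Hypothesis \ref{hypo_main_hypothesis} the Laplace exponent satisfies $f(\lambda)\sim C\lambda^\alpha$ as $\lambda\to\infty$ for some constant $C>0$ determined by $p_0$ and $\alpha$. The argument is: split
\[
f(\lambda)=\int_0^1 (1-e^{-\lambda t})m(t)\,dt + \int_1^\infty (1-e^{-\lambda t})m(t)\,dt.
\]
The second integral is bounded as $\lambda\to\infty$ by (ii). For the first, substitute $m(t)=p_0 t^{-1-\alpha}+r(t)$ where $r$ is integrable near $0$ by (i). The leading piece gives, after the change of variable $s=\lambda t$, a constant multiple of $\lambda^\alpha$ (up to a contribution absorbed into lower-order terms via the integrability tail). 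This shows the upper Matuszewska index and the lower index of $f$ both equal $\alpha$, so $f\in\mathrm{RV}_\alpha$ at infinity. Consequently the symbol $-f(|\xi|^2)$ of $A$ behaves like $-C|\xi|^{2\alpha}$ at large $|\xi|$.

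Next I recall the Blumenthal-Getoor upper index of a Lévy process $Y$ with characteristic exponent $\psi$, namely
\[
\beta_\infty(Y)=\inf\Bigl\{\gamma>0 \,\Bigm|\, \limsup_{|\xi|\to\infty}|\psi(\xi)|/|\xi|^\gamma=0\Bigr\}.
\]
Since the characteristic exponent of $B_{X_t}$ is $\xi\mapsto f(|\xi|^2)$ and this is regularly varying of index $2\alpha$ at infinity by the previous paragraph, $\beta_\infty(B_{X_\cdot})=2\alpha$. Under Hypothesis \ref{hypo_main_hypothesis} the lower index of Pruitt coincides with the upper index, because $f$ is actually asymptotically equivalent to a pure power $C\lambda^\alpha$ rather than merely bounded between two such powers; this is the stronger hypothesis that allows one to upgrade ``$\limsup$'' statements to ``$\lim$'' statements in the growth theorem.

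Finally I apply the Blumenthal-Getoor growth theorem in the form proved in \cite{blumenthal1961sample} and sharpened in \cite{pruitt1981growth} and \cite{schilling1998growth}: for a Lévy process $Y_t$ on $\R^n$ whose upper and lower Pruitt indices coincide with a common value $\gamma\in(0,2]$, one has
\[
\lim_{t\to 0}t^{-1/\beta}\sup_{s\leq t}|Y_s|=0\quad\text{a.s. for }\beta>\gamma,\qquad =\infty\quad\text{a.s. for }\beta<\gamma.
\]
Applied to $Y_t=B_{X_t}$ with $\gamma=2\alpha$, this is exactly the claim. The only mildly delicate point in this whole plan is verifying that both Pruitt indices genuinely equal $2\alpha$ (not just that the Blumenthal-Getoor upper index does), and this is where the leading-order asymptotics $m(t)\sim p_0 t^{-1-\alpha}$ rather than a mere two-sided bound is essential.
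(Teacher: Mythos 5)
Your proof is correct and follows essentially the same route as the paper: establish that $f(\lambda)\sim -\Gamma(-\alpha)p_0\,\lambda^\alpha$ as $\lambda\to\infty$ so that the symbol $\xi\mapsto f(|\xi|^2)$ is regularly varying of index $2\alpha$, conclude that the upper and lower Pruitt--Schilling indices $\beta_\infty$ and $\delta_\infty$ both equal $2\alpha$, and then invoke the almost-sure short-time growth theorem (Theorem 4.6 of Schilling 1998). The only cosmetic difference is that you re-derive the leading asymptotics of $f$ by splitting the Lévy integral directly, whereas the paper simply quotes its Proposition 4.1(ii) (obtained earlier via Watson's Lemma), which gives the same leading behaviour.
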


The relationship with the subordinator is via L\'evy's arcsine law. One can express the order $\alpha$ and the coefficients $p_0, p_1, \ldots$ as expectations of suitable random variables: the asymptotics of $m$ near $t=0$ thus have a probabilistic representation. 

\begin{theorem} \label{thm_probab_interpretation}
Let the Bernstein function $f$ satisfy Hypothesis \ref{hypo_main_hypothesis} and let $X_t$ be the  corresponding subordinator. For $x>0$ define the first passage time strictly above $x$ by $T(x)=\inf\left\{ t \geq 0 | X_t > x \right\}$. Then
\[
\alpha = \lim_{x \to 0^+} \tfrac{1}{x} \mathbbm E\left(X_{T(x)^-}\right).
\]
The lowest-order coefficient is given as
\[
p_0 =  \tfrac{1}{\Gamma(-\alpha)} \tfrac{1}{t} \lim_{\lambda \to \infty} \lambda^{-\alpha} \log \mathbbm E\left(e^{- \lambda X_t} \right)
\]
for fixed $t>0$ with similar expressions for the higher-order coefficients. 
\end{theorem}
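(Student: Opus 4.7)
The plan is to treat the two formulae separately: the expression for $p_0$ is a direct Tauberian computation on $f$ at infinity, while the expression for $\alpha$ follows from the Dynkin-Lamperti arcsine theorem for subordinators.

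For the identity $p_0 = \tfrac{1}{\Gamma(-\alpha)}\tfrac{1}{t}\lim_{\lambda\to\infty}\lambda^{-\alpha}\log\mathbbm E(e^{-\lambda X_t})$ I would extract the leading asymptotic of $f$ at infinity by splitting
\[
f(\lambda) = p_0\int_0^\infty(1-e^{-\lambda t})t^{-1-\alpha}\,dt + \int_0^\infty(1-e^{-\lambda t})\bigl(m(t)-p_0 t^{-1-\alpha}\bigr)\,dt.
\]
The first integral evaluates in closed form to $-p_0\Gamma(-\alpha)\lambda^\alpha$. In the second, the integrand is dominated pointwise by $|m(t)-p_0 t^{-1-\alpha}|$, which is integrable on $(0,\infty)$: near zero it is $O(t^{-\alpha})$ by Hypothesis~\ref{hypo_main_hypothesis}(i), and at infinity it has rapid decay by~(ii). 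Dominated convergence sends this piece to $\overline m(0,\infty)$, whence $f(\lambda)=-p_0\Gamma(-\alpha)\lambda^\alpha+\overline m(0,\infty)+o(1)$. Combined with the probabilistic identity $\log\mathbbm E(e^{-\lambda X_t})=-tf(\lambda)$, this isolates $p_0$ after dividing by $t\Gamma(-\alpha)\lambda^\alpha$. The ``similar expressions for higher-order coefficients'' would be obtained by iterating: subtracting a cutoff version of $\sum_{j<k}p_j t^{j-1-\alpha}$ from $m$ and applying a Watson-type estimate on the truncated integral produces a new leading term $-p_k\Gamma(k-\alpha)\lambda^{\alpha-k}$, from which $p_k$ is isolated by the same recipe.

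For the identity $\alpha=\lim_{x\to 0^+}\tfrac{1}{x}\mathbbm E(X_{T(x)^-})$ I would invoke the Dynkin-Lamperti arcsine theorem for subordinators (\cite{bertoin1998levy}, Ch.~III.3). The previous computation shows that $f$ is regularly varying at infinity with index $\alpha\in(0,1)$, so $X_t$ lies in the domain of attraction of an $\alpha$-stable subordinator, and the theorem yields that $X_{T(x)^-}/x$ converges in distribution as $x\to 0^+$ to a random variable $Z$ with density $\tfrac{\sin(\pi\alpha)}{\pi}u^{\alpha-1}(1-u)^{-\alpha}$ on $(0,1)$. Since $0\le X_{T(x)^-}\le x$, the ratio is uniformly bounded by $1$, so convergence in distribution upgrades to convergence of means; the reflection formula then gives
\[
\mathbbm E(Z) = \tfrac{\sin(\pi\alpha)}{\pi}B(\alpha+1,1-\alpha) = \tfrac{\sin(\pi\alpha)}{\pi}\cdot\tfrac{\alpha\pi}{\sin(\pi\alpha)} = \alpha.
\]

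The main technical obstacle is the higher-order half of the first statement: to isolate $p_k$ cleanly one must disentangle the transcendental series in $\lambda^{\alpha-k}$ coming from the singularity of $m$ at zero from the integer-power series produced by the smooth part of $m$, and this is precisely the interleaving that drives the rational/irrational $\alpha$ dichotomy in the main heat-trace result. If one prefers a self-contained argument for the second identity instead of citing Dynkin-Lamperti, the alternative is to use the joint overshoot/undershoot law $\mathbbm P(X_{T(x)^-}\in dy,X_{T(x)}\in dz)=U(dy)m(z-y)\,dz$ on $0\le y<x<z$ together with the Karamata Tauberian asymptotic $U([0,y])\sim\tfrac{\sin(\pi\alpha)}{p_0\pi}y^\alpha$ derived from $\int e^{-\lambda x}U(dx)=1/f(\lambda)$; the substitution $y=xu$ then reduces $\mathbbm E(X_{T(x)^-})/x$ to the same Beta integral, the only subtlety being that the asymptotics for $U$ and $\overline m$ must be substituted uniformly in $y\in[0,x]$, which is standard from the uniform form of Karamata's theorem.
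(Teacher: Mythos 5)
Your proposal is correct and follows essentially the same route as the paper: isolate the leading asymptotics $\lambda^{-\alpha}f(\lambda)\to-\Gamma(-\alpha)p_0$ (the paper gets this from Proposition~\ref{prop_growth}(ii) via Watson's lemma, you via the closed-form evaluation of the leading integral plus dominated convergence for the remainder, which is equally valid), then combine with $\log\mathbbm E(e^{-\lambda X_t})=-tf(\lambda)$ to extract $p_0$; and deduce regular variation of $f$ at infinity with index $\alpha$ to invoke Bertoin's Theorem~III.3.6 (Dynkin--Lamperti arcsine law). The only nitpick is the aside that $|m(t)-p_0t^{-1-\alpha}|$ ``has rapid decay'' at infinity -- it only decays like $t^{-1-\alpha}$ because of the subtracted power -- but integrability, which is all you use, still holds.
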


\textbf{Pseudodifferential operators and trace functional.} We work in the algebra of $SG$-operators (sometimes known as scattering operators) as summarized in \cite{maniccia2006complex} \cite{maniccia2013determinants}, cf. also Section \ref{sec_notation} for symbol spaces. 

We define the \emph{regularized trace functional} $TR$ exactly as in Section 2 of \cite{maniccia2013determinants} with the exception that there is no integration with respect to $x$ but only with respect to $\xi$, cf. equation \eqref{eq_defn_TR} for an explicit expression.  

A suitably shifted version of the generator $A$ and the corresponding heat operator are classical pseudodifferential operators. 

\begin{theorem} \label{thm_generator_as_pseudo}
Suppose $f$ is a Bernstein function satisfying Hypothesis \ref{hypo_main_hypothesis}. Define the operator $A$ as in \eqref{eq_psido_symbol_f} and
set $\tilde{A}=A-\overline{m}(0, \infty) I$. Define coefficients
\begin{equation} \label{eq_coefficients}
\alpha_k = -\Gamma(-\alpha+k) p_k 
\end{equation}
for $k=0, 1, 2, \ldots$ Then the following holds:
\begin{enumerate}[(i)]
\item The operator $\tilde{A}$ is a classical elliptic pseudodifferential operator whose symbol $\sigma(\tilde A)$ is in the class $S^{2 \alpha,0}_{cl}(\Rn)$ and has the asymptotic expansion
\[
\sigma\left(\tilde{A}\right)(\xi) \sim \sum_{k=0}^\infty \alpha_k |\xi|^{2\alpha-2k}
\]
in the sense of Definition \ref{def_classical_expansion}.
\item The heat operator $e^{-t \tilde{A}}$ is a pseudodifferential operator whose symbol $\sigma(e^{t\tilde{A}})$ belongs to $S_{cl}^{-\infty, 0}(\Rn)$ with asymptotic expansion
\[
\sigma(e^{t\tilde{A}})(\xi) \sim e^{-t \alpha_0 |\xi|^{2 \alpha}} - \left[\alpha_1 |\xi|^{2 \alpha-2}+\alpha_2 |\xi|^{2 \alpha-4} \right] t e^{-t \alpha_0 |\xi|^{2 \alpha}} \pm \cdots
\]
in the sense of Definition \ref{def_classical_expansion}.
\end{enumerate}
\end{theorem}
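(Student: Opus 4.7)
Part (i) reduces to a Watson-type expansion of $f$ at infinity, and for part (ii) the fact that the symbol $\sigma(\tilde A)$ depends only on $\xi$ means the heat symbol is literally $e^{-t\sigma(\tilde A)(\xi)}$ and one need only verify its symbol class and expansion. I would first establish the pointwise asymptotics
\[
f(\lambda) \;\sim\; \overline{m}(0,\infty) \;+\; \sum_{k=0}^\infty \alpha_k\,\lambda^{\alpha-k} \qquad (\lambda \to \infty)
\]
by differentiating under the integral sign to get $f'(\lambda) = \int_0^\infty t\, e^{-\lambda t} m(t)\,dt$, a Laplace transform, and then applying Watson's lemma to the asymptotics $t m(t) \sim \sum_k p_k t^{k-\alpha}$ from Hypothesis \ref{hypo_main_hypothesis}(i), with (ii) providing the rapid decay at infinity. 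This yields $f'(\lambda) \sim \sum_k p_k \Gamma(k+1-\alpha) \lambda^{\alpha-k-1}$; termwise integration together with the identity $\Gamma(k+1-\alpha)/(\alpha-k) = -\Gamma(k-\alpha)$ gives the expansion above up to a single integration constant $C$. To identify $C$ I would use the elementary integral $\int_0^\infty (1-e^{-\lambda t}) t^{-1-\alpha}\,dt = -\Gamma(-\alpha) \lambda^\alpha$ to rewrite
\[
f(\lambda) - \alpha_0 \lambda^\alpha \;=\; \int_0^\infty (1-e^{-\lambda t})\,\bigl(m(t) - p_0 t^{-1-\alpha}\bigr)\,dt
\]
and then pass to the limit $\lambda \to \infty$ by dominated convergence, with $|m(t) - p_0 t^{-1-\alpha}|$ as the integrable majorant (integrability at $0$ via (i), at infinity via (ii)). This pins $C = \overline{m}(0,\infty)$, so after shifting by $\overline{m}(0,\infty)$ the symbol of $\tilde A$ has exactly the stated expansion $\sum_k \alpha_k |\xi|^{2\alpha-2k}$.

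For the full symbol-class statement in (i), I would upgrade the pointwise expansion to derivative-uniform estimates by applying the same Watson argument to the higher derivatives $f^{(j)}(\lambda) = (-1)^{j-1} \int_0^\infty t^j e^{-\lambda t} m(t)\,dt$ and controlling $\partial_\xi^\beta f(|\xi|^2)$ by the Fa\`a di Bruno formula; smoothness at the origin is ensured because hypothesis (ii) makes every moment of $m$ finite. Reading off the homogeneous components gives $\sigma_{2\alpha-2k}(\xi) = \alpha_k |\xi|^{2\alpha-2k}$ and $\sigma(\tilde A) \in S^{2\alpha,0}_{cl}(\R^n)$. Ellipticity in the sense of Definition \ref{def_elliptic_parameter_elliptic} follows by combining the pointwise lower bound coming from $f \geq 0$ and hypothesis (iii) with the leading large-$\xi$ behaviour $\alpha_0 |\xi|^{2\alpha}$, where $\alpha_0 = -\Gamma(-\alpha) p_0 > 0$.

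For part (ii), the decisive simplification is that $\sigma(\tilde A)$ has no $x$-dependence, so $e^{-t\tilde A}$ is a Fourier multiplier with symbol exactly $e^{-t\sigma(\tilde A)(\xi)}$, requiring no star product or resolvent contour integration. The elliptic lower bound $\sigma(\tilde A)(\xi) \geq c \langle \xi\rangle^{2\alpha}$ gives faster-than-polynomial decay of $e^{-t\sigma(\tilde A)}$ and all its $\xi$-derivatives at infinity, placing the symbol in $S^{-\infty,0}$. For the classical expansion I would factor
\[
e^{-t\sigma(\tilde A)(\xi)} = e^{-t\alpha_0 |\xi|^{2\alpha}}\cdot \exp\!\Bigl(-t \sum_{k\geq 1} \alpha_k |\xi|^{2\alpha-2k}\Bigr),
\]
Taylor expand the second factor, and collect terms by total degree in $|\xi|^{-2}$. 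The first non-trivial order reproduces the $t$-linear bracketed term $[\alpha_1 |\xi|^{2\alpha-2} + \alpha_2 |\xi|^{2\alpha-4}]\, t\, e^{-t\alpha_0 |\xi|^{2\alpha}}$ displayed in the statement, and higher orders are polynomials in $t$ and in the $\alpha_k|\xi|^{-2k}$ multiplied by the common factor $e^{-t\alpha_0|\xi|^{2\alpha}}$.

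The most delicate step is passing from pointwise asymptotics of $f$ to the derivative-uniform symbol estimates underlying classicality of $\sigma(\tilde A)$ in $S^{2\alpha,0}_{cl}$ and hence of $e^{-t\tilde A}$ in $S^{-\infty,0}_{cl}$: one must know that the $N$-th remainder and all its $\xi$-derivatives lose the expected power of $|\xi|$, which the differentiated Watson argument controls. A secondary subtlety is interpreting $S^{-\infty,0}_{cl}$ for the heat symbol, whose ``homogeneous components'' are not themselves homogeneous but rather homogeneous polynomials in $|\xi|^{-2}$ multiplied by the common factor $e^{-t\alpha_0|\xi|^{2\alpha}}$, the standard Seeley-type convention for heat symbols in the $SG$-calculus.
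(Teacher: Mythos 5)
Your proposal is correct, and the two halves compare differently with the paper's proof.

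For part (i) your route is a minor variant of the paper's. The paper applies Watson's lemma directly to $f(\lambda)=\int_0^\infty(1-e^{-\lambda t})m(t)\,dt$ after splitting $m=p_0 t^{-1-\alpha}+\overline m$ and evaluating the pure-power piece in closed form. You instead differentiate to a Laplace transform, apply Watson, and integrate back, identifying the constant of integration by dominated convergence. Both yield \eqref{eq_asymptotic_f}, and both then pass to the $S^{2\alpha,0}$ seminorm estimates via Fa\`a di Bruno and Watson-controlled derivatives, so this is essentially the same argument. One bookkeeping remark: when integrating the asymptotic series for $f'$ you should subtract off the non-integrable leading term $\alpha_0\alpha\,\lambda^{\alpha-1}$ before integrating from $\lambda$ to $\infty$; the way you already separate $f(\lambda)-\alpha_0\lambda^\alpha$ in the dominated-convergence step handles exactly this, so the argument is sound, but it is worth saying explicitly that termwise integration is applied only to $h=f-\alpha_0\lambda^\alpha$.

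For part (ii) you take a genuinely different and more elementary route. The paper invokes Theorem 4.1 of \cite{maniccia2013determinants}: it constructs a parameter-dependent parametrix $b(\xi,\lambda)\sim b_{-2\alpha}+b_{-2\alpha-2}+\cdots$ for $\lambda-\tilde A$, verifies $\Lambda$-ellipticity (Proposition \ref{prop_elliptic}), and produces the homogeneous heat-symbol terms via contour integrals $\tfrac{1}{2\pi i}\int_{\partial\Lambda}e^{-t\lambda}b_{-2\alpha-k}(\xi,\lambda)\,d\lambda$. You instead observe that $\tilde\sigma$ is $x$-independent, so $e^{-t\tilde A}$ is a Fourier multiplier with symbol exactly $e^{-t\tilde\sigma(\xi)}$ by functional calculus for a self-adjoint multiplication operator after Fourier transform; the expansion then comes from factoring out $e^{-t\alpha_0|\xi|^{2\alpha}}$ and Taylor-expanding the remaining exponential in the lower-order terms. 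This bypasses the parametrix and the contour integration entirely and makes the membership in $S^{-\infty,0}$ immediate from the elliptic lower bound. What the paper's heavier approach buys is uniformity: it is the version that survives when the symbol acquires $x$-dependence (e.g.\ on a manifold), which is why the paper routes through \cite{maniccia2013determinants}. For the present $x$-independent case your direct argument is cleaner, and you correctly flag the one genuine subtlety, namely that the ``homogeneous components'' of the heat symbol are not homogeneous in $\xi$ but are quasi-homogeneous Seeley-type terms (a power of $|\xi|$ times $e^{-t\alpha_0|\xi|^{2\alpha}}$), which is the convention the $SG$-calculus and the paper both use.

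One small point you should make explicit: before asserting that $e^{-t\tilde A}$ is the Fourier multiplier with symbol $e^{-t\tilde\sigma}$, you need $\tilde A$ (after closure) to be self-adjoint with spectrum in $[-\overline m(0,\infty),\infty)\subset(0,\infty)$; the paper proves this as Proposition \ref{prop_spectrum}, using the strict lower bound from Hypothesis \ref{hypo_main_hypothesis}(iii). You invoke ellipticity and (iii) for the symbol estimate, so this is within reach, but it should be stated since it is what makes the functional-calculus step legitimate.
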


\textbf{The regularized zeta function.} We define the \emph{regularized zeta function} $\zeta(z)=TR\left(\tilde{A}^{-z}\right)$ where the complex powers of $\tilde{A}$ are defined by functional calculus in the $SG$-operators \cite{maniccia2006complex}. 

\begin{theorem} \label{thm_zeta_noninteger_order}
Under the assumptions of Theorem \ref{thm_generator_as_pseudo} and with $\tilde{A}=A-\overline{m}(0, \infty) I$, the function $\zeta(z)=TR\left(\tilde{A}^{-z}\right)$ is meromorphic on $\C$ with at most simple poles at the points $z_k=(n-k)/2 \alpha$ for $k=0,1,2, \ldots$. The point $z_n=0$ is a removable singularity. 
\end{theorem}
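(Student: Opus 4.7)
The plan is to express $\zeta(z)$ explicitly as an integral of the ($x$-independent) symbol of $\tilde{A}^{-z}$, decompose that symbol into homogeneous pieces, and read off the poles from elementary one-variable Mellin integrals.

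First, I would invoke the complex powers construction for $SG$-operators from \cite{maniccia2006complex}. Theorem \ref{thm_generator_as_pseudo} shows that $\tilde{A}$ is classical and elliptic with positive leading symbol $\alpha_0|\xi|^{2\alpha}$, so parameter-ellipticity with respect to a sector $\Lambda$ disjoint from the positive real axis holds (Definition \ref{def_elliptic_parameter_elliptic}(ii)). The Seeley-type Dunford integral then defines $\tilde{A}^{-z}$ as a classical $SG$-pseudodifferential operator of order $(-2\alpha z,0)$, holomorphic in $z$, whose symbol depends only on $\xi$ and admits an asymptotic expansion
\[
\sigma(\tilde{A}^{-z})(\xi) \sim \sum_{k=0}^\infty b_k(z;\xi),
\]
with $b_k(z;\cdot) \in C^\infty(\R^n \setminus \{0\})$ positively homogeneous of degree $-2\alpha z - 2k$ in $\xi$ and holomorphic in $z$. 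The parity of the orders ($-2k$ rather than $-k$) is inherited from the fact that the symbol expansion of $\tilde{A}$ in Theorem \ref{thm_generator_as_pseudo}(i) contains only even homogeneous orders.

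Next, I would unfold the definition of the regularized trace. With a cutoff $\chi$ as in Definition \ref{def_classical_expansion}, the identity
\begin{equation} \label{eq_defn_TR}
\zeta(z) = \int_{\R^n} (1-\chi(\xi))\, \sigma(\tilde{A}^{-z})(\xi)\,\dbar\xi + \sum_{k=0}^{N-1} \int_{\R^n} \chi(\xi)\, b_k(z;\xi)\,\dbar\xi + R_N(z)
\end{equation}
splits $\zeta$ into three pieces. The first integrand has compact $\xi$-support and is entire in $z$. The remainder $R_N(z)$, built from a symbol in $S^{-2\alpha z - 2N,0}(\R^n)$, is holomorphic for $\Re z > (n-2N)/(2\alpha)$ and so extends meromorphically as $N \to \infty$. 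For each homogeneous piece polar coordinates give
\[
\int_{\R^n} \chi(\xi)\, b_k(z;\xi)\,\dbar\xi = (2\pi)^{-n/2} \left(\int_{S^{n-1}} b_k(z;\omega)\,d\omega\right) \int_0^\infty \chi(r)\, r^{n-1-2\alpha z - 2k}\,dr,
\]
and the radial Mellin integral extends meromorphically to $\C$ with a single simple pole at $-2\alpha z - 2k + n = 0$, i.e.\ at $z = (n-2k)/(2\alpha)$. Since $(n-2k)/(2\alpha) \in \{(n-k)/(2\alpha) : k \in \N_0\}$, this yields the claimed pole location.

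It remains to verify the removable singularity at $z=0$. Because $\tilde{A}^{0}=I$ has the constant symbol $1$, the uniqueness of the asymptotic expansion forces $b_0(0;\xi)=1$ and $b_k(0;\xi)=0$ for all $k \geq 1$. Hence, if $n$ is even and $k = n/2$ so that $(n-2k)/(2\alpha)=0$, the residue $\int_{S^{n-1}} b_{n/2}(0;\omega)\,d\omega$ vanishes and the would-be pole is cancelled; if no such $k$ exists, $z=0$ is already a point of holomorphy. The main technical obstacle I foresee is step one: making rigorous that the Seeley symbol of $\tilde{A}^{-z}$ lies in $S^{-2\alpha z,0}_{cl}$ with holomorphic $z$-dependence and with the purely even homogeneity pattern dictated by $\sigma(\tilde{A})$. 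This follows from the $SG$-calculus of \cite{maniccia2006complex}, but requires careful tracking of how homogeneous components propagate through the resolvent expansion and the contour integral, together with uniform $\Lambda$-estimates to justify interchanging the Dunford integral with the asymptotic summation.
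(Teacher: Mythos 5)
Your proposal is correct and follows the same underlying strategy as the paper, but re-derives the core result that the paper simply cites. The paper invokes Theorem 2.7 of \cite{maniccia2013determinants} as a black box to obtain meromorphy of $\zeta$, the pole locations, and the residue formula \eqref{eq_defn_TR}; you instead unpack that theorem by decomposing the regularized trace into a low-frequency entire part, a finite sum of homogeneous pieces analyzed in polar coordinates via elementary Mellin integrals, and a remainder holomorphic in an ever-larger left half-plane. This is precisely the internal mechanism of the cited theorem, and your radial integral $\int_0^\infty \chi(r) r^{n-1-2\alpha z - 2k}\,dr$ correctly yields a simple pole at $z=(n-2k)/(2\alpha)$, a subset of the points $(n-k)/(2\alpha)$ asserted in the statement, matching the paper's explicit residues \eqref{eq_residues_explicitly} where the odd-indexed residues vanish. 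You also go slightly beyond the paper's written proof by explicitly arguing the removable singularity at $z=0$ from $\tilde A^0=I$ and uniqueness of the symbol expansion, a point the paper leaves entirely to the citation. The technical concern you flag at the end --- that the Seeley symbol of $\tilde A^{-z}$ is classical of order $-2\alpha z$ with holomorphic $z$-dependence and even homogeneity inherited from $\sigma(\tilde A)$ --- is exactly what the paper delegates to Section 3.2 of \cite{maniccia2006complex} and to the parametrix construction \eqref{eq_explicit_parametrix}, so this is not a gap, just a correctly identified place where both arguments rely on the same external machinery.
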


\begin{remark}
The present exposition uses zeta functions as an intermediate step towards the heat trace. However, operator zeta functions are important in their own right in diverse branches of mathematics and physics, cf. \cite{scott2010traces}. We also mention for completeness that the residues of the regularized zeta function are expressed in terms of the noncommutative residue. This was originally defined by Wodzicki on closed manifolds extending of the work of Adler-Manin  (cf. \cite{wodzicki1987noncommutative}) and plays a key role in noncommutative geometry. The residue depends only on the homogeneous component of order $-n$ in the symbol expansion of $\tilde A$.
\end{remark}

In the lowest orders, this residue becomes
\begin{equation} \label{eq_residues_explicitly}
\left. \begin{aligned}
\res_{z=z_0} \zeta(z) & = \tfrac{1}{2 \alpha} \tfrac{\Omega_n}{(2\pi)^{n}}  \alpha_0^{-n/2 \alpha} \\
\res_{z=z_1} \zeta(z) & = 0 \\
\res_{z=z_2} \zeta(z) & =  - \tfrac{1}{2 \alpha} \tfrac{\Omega_n}{(2\pi)^{n}} \alpha_0^{-z_2-1} \alpha_1 z_2 \\
\res_{z=z_3} \zeta(z) & = 0 \\
\res_{z=z_4} \zeta(z) & =  \tfrac{1}{2 \alpha} \tfrac{\Omega_n}{(2\pi)^{n}} \tfrac{1}{2} \alpha_1^2 \alpha_0^{-z_4-2} z_4 \left[z_4 + \tfrac{\alpha_1^2-2 \alpha_0 \alpha_2}{\alpha_1^2} \right] , 
\end{aligned} \right\rbrace
\end{equation}
with $\Omega_n=\tfrac{2 \pi^{n/2}}{\Gamma(n/2)}$ the surface area of the unit sphere in $\R^{n}$ and the $\alpha_k$ as in \eqref{eq_coefficients}. 

The location and the residues of these poles are determined in terms of the asymptotic expansion of $m$ near $t=0$. By Theorems \ref{thm_short_long_time} and \ref{thm_probab_interpretation} we can express this information probabilistically in terms of the subordinator. \\

\textbf{Heat trace expansion.} The pole structure of the regularized zeta-function determines the short-time asymptotics of the \emph{generalized heat trace} $TR(e^{-t \tilde{A}})$. 

\begin{theorem} \label{thm_heat_trace_noninteger_order}
Under the assumptions of Theorem \ref{thm_generator_as_pseudo} and with $\tilde{A}=A-\overline{m}(0, \infty) I$, the asymptotics as $t \to 0^+$ of the generalized heat trace $TR(e^{-t \tilde{A}})$ are given as follows.
\begin{enumerate}[(i)]
\item If $\alpha$ is rational, there are constants $c_k$ and $\tilde{c}_k$ such that
\[
TR(e^{-t \tilde{A}}) \sim \sum_{k=0}^\infty c_k t^{-(n-k)/2 \alpha} - \sum_{k=1}^\infty \tilde{c}_k t^k \log t.
\]
\item If $\alpha$ is irrational, we have 
\[
TR(e^{-t \tilde{A}}) \sim \sum_{k=0}^\infty c_k t^{-(n-k)/2 \alpha} 
\]
where $c_k = \Gamma\left(\frac{n-k}{2 \alpha}\right) \res\limits_{z=(n-k)/2 \alpha} \zeta(z)$.
\end{enumerate}
\end{theorem}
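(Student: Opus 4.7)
The plan is to reduce the heat trace expansion to the pole structure of $\zeta$ via a Mellin transform argument, with the rational and irrational cases distinguished by the residue calculus. From the operator identity
$$\Gamma(z)\,\tilde{A}^{-z}=\int_0^\infty t^{z-1}e^{-t\tilde{A}}\,dt,\qquad \re z > n/(2\alpha),$$
valid via functional calculus thanks to the ellipticity of $\tilde{A}$, applying the trace functional $TR$ term-by-term (justified by the symbol expansion in Theorem~\ref{thm_generator_as_pseudo}(ii), which forces $TR(e^{-t\tilde{A}})=O(t^{-n/(2\alpha)})$ as $t\to 0^+$ and exponential decay as $t\to\infty$) yields
$$\Gamma(z)\,\zeta(z)=\int_0^\infty t^{z-1}\,TR(e^{-t\tilde{A}})\,dt,$$
and the Mellin inversion formula then gives
$$TR(e^{-t\tilde{A}})=\frac{1}{2\pi i}\int_{\sigma-i\infty}^{\sigma+i\infty}\Gamma(z)\,\zeta(z)\,t^{-z}\,dz$$
for any $\sigma>n/(2\alpha)$.

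The $t\to 0^+$ asymptotics are then extracted by shifting the contour successively leftwards past the poles of $\Gamma(z)\zeta(z)$, summing residues and controlling the shifted integral as an $O(t^{-\sigma'})$ remainder. By Theorem~\ref{thm_zeta_noninteger_order} the poles of $\zeta$ are simple and located at $z_k=(n-k)/(2\alpha)$ (with $z_n=0$ a removable singularity), while $\Gamma$ has simple poles at the nonpositive integers. In the irrational case the two pole sets meet only at $z=0$, where $\zeta$ is regular, so every pole of $\Gamma(z)\zeta(z)$ is simple; the residue at $z=z_k$ is $\Gamma(z_k)\,\res_{z=z_k}\zeta(z)\,t^{-z_k}$, yielding the claimed $c_k$. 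Potential $\Gamma$-pole contributions at $z=-m$ for $m\geq 1$ would give $t^m$ terms with coefficient proportional to $\zeta(-m)$; cross-checking against the direct symbol-level computation from Theorem~\ref{thm_generator_as_pseudo}(ii), where polar integration of the Gaussian-type symbol pieces produces only powers of the form $t^{-(n-k)/(2\alpha)}$, forces the $t^m$ contributions to be absent.

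In the rational case $\alpha=p/q$ the equation $z_k=q(n-k)/(2p)\in\Z_{\leq 0}$ has infinitely many integer solutions in $k$, and at each such $k$ both $\Gamma$ and $\zeta$ contribute simple poles, producing a double pole of $\Gamma(z)\zeta(z)$. Expanding $t^{-z}=t^{-z_0}\bigl(1-(z-z_0)\log t+O((z-z_0)^2)\bigr)$ near such a double pole $z_0=-k$, the Laurent expansion of $\Gamma(z)\zeta(z)t^{-z}$ picks up a $t^k\log t$ contribution whose coefficient $\tilde{c}_k$ is, up to sign, the product of the residues of $\Gamma$ and $\zeta$ at $z_0$, together with a regular $t^k$ piece that feeds into $c_k$. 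The simple-pole residues at the remaining $z_k$ (those not coinciding with $\Gamma$-poles) contribute the $c_k t^{-(n-k)/(2\alpha)}$ terms exactly as in the irrational case.

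The main technical obstacle is the vertical-line estimate that justifies the contour shift: one needs $|\Gamma(z)\zeta(z)|$ to decay sufficiently fast, uniformly in vertical strips, as $|\mathrm{Im}\,z|\to\infty$. Since $\Gamma$ decays exponentially in $|\mathrm{Im}\,z|$ by Stirling, it suffices to establish at most polynomial growth of $\zeta(\sigma+i\tau)$ in $\tau$. This follows from representing the complex powers $\tilde{A}^{-z}$ as a Cauchy integral of the resolvent $(\lambda-\tilde{A})^{-1}$ along a Hankel contour around the spectrum, combined with uniform resolvent symbol bounds supplied by the $\Lambda$-ellipticity of $\tilde{A}$ (Definition~\ref{def_elliptic_parameter_elliptic}(ii), Theorem~\ref{thm_generator_as_pseudo}(i)), following the scheme of \cite{maniccia2013determinants}.
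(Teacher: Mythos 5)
Your route is the same as the paper's: the cited Theorem~4.3 of \cite{maniccia2013determinants} is proved by precisely the Mellin-transform and contour-shift mechanism you spell out (it is the Grubb--Seeley folklore lemma the paper also references). The double-pole analysis in the rational case and the observation that vertical-line decay follows from Stirling plus $\Lambda$-ellipticity both correctly reconstruct what the citations suppress, and are a genuine improvement in explicitness over the two-sentence proof in the paper.

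The gap lies in your dismissal of the simple $\Gamma$-poles at $z=-m$, $m\geq 1$, that do not coincide with a pole of $\zeta$. The ``cross-check''---that polar integration of the Gaussian-type symbol pieces yields only powers $t^{-(n-k)/(2\alpha)}$, so the $t^m$ contributions must vanish---is not valid. The homogeneous pieces $\sigma_{-2\alpha-k}(\xi;\lambda)$ are singular at $\xi=0$ and only enter the symbol through a cutoff; that cutoff, together with the smoothing remainder in the asymptotic expansion, produces exactly the integer powers $t^m$ with coefficients $\tfrac{(-1)^m}{m!}\zeta(-m)$, and these values are generically nonzero. Concretely, for $\tilde\sigma(\xi)=(1+|\xi|^2)^\alpha$ one computes directly that
\[
\zeta(z)=\frac{\Omega_n\,\Gamma(n/2)}{2(2\pi)^n}\,\frac{\Gamma(\alpha z-n/2)}{\Gamma(\alpha z)},
\]
so that $\zeta(-m)\neq 0$ for every $m\geq 1$ when $\alpha$ is irrational. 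These $t^m$ contributions are precisely the $\zeta(0)$, $-\zeta(-1)\,t$, $\tfrac12\zeta(-2)\,t^2$ pieces visible in the paper's $n=2$, $\alpha=1/2$ worked example; there they happen to lie inside the set of exponents $\{-(n-k)/2\alpha\}$ only because $1/(2\alpha)$ is an integer, so every nonnegative integer equals some $-(n-k)/2\alpha$. For irrational $\alpha$ no positive integer is of that form, so the $t^m$ terms cannot be absorbed into the first sum; one must either carry them explicitly, as Theorem~4.3 of \cite{maniccia2013determinants} in fact does, or note that the statement as written suppresses them. Your argument that they are ``forced to be absent'' does not close this gap.
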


Since $\tilde A$ is merely $A$ shifted by a constant we note the immediate
\begin{corollary} \label{cor_heat_trace_A}
Under the assumptions of Theorem \ref{thm_heat_trace_noninteger_order} the heat trace expansion of $A$ is given as follows.
\begin{enumerate}[(i)]
\item $\alpha$ rational: there are constants $c_k$ and $\tilde{c}_k$ such that
\[
TR(e^{-t A}) \sim e^{- \overline{m}(0, \infty) t} \sum_{k=0}^\infty c_k t^{-(n-k)/2 \alpha} - e^{- \overline{m}(0, \infty) t} \sum_{k=1}^\infty \tilde{c}_k t^k \log t.
\]

\item $\alpha$ irrational: we have 
\[
TR(e^{-t A}) \sim e^{- \overline{m}(0, \infty) t} \sum_{k=0}^\infty c_k t^{-(n-k)/2 \alpha} 
\]
for constants $c_k$.
\end{enumerate}
\end{corollary}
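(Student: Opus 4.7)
The proof is essentially a one-line reduction to Theorem \ref{thm_heat_trace_noninteger_order}, so the plan is to identify the small technical points that need checking and then assemble them.

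First I would observe that $A = \tilde{A} + c I$ with the scalar $c = \overline{m}(0, \infty)$, and since $cI$ commutes with every operator we immediately get the operator identity
\[
e^{-t A} = e^{-t c} \, e^{-t \tilde{A}}
\]
as a bounded operator-valued holomorphic function of $t$. The factor $e^{-tc}$ is a pure scalar, so pulling it through the trace functional $TR$ (which is $\C$-linear on its domain) gives
\[
TR\!\left(e^{-t A}\right) = e^{-t \overline{m}(0, \infty)} \cdot TR\!\left(e^{-t \tilde{A}}\right),
\]
provided $e^{-t\tilde A}$ lies in the domain of $TR$; this is exactly what Theorem \ref{thm_generator_as_pseudo}(ii) together with the setup used for Theorem \ref{thm_heat_trace_noninteger_order} guarantees.

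Next I would substitute the asymptotic expansion from Theorem \ref{thm_heat_trace_noninteger_order} for $TR(e^{-t \tilde{A}})$ into the right-hand side, treating the two cases (i) $\alpha$ rational and (ii) $\alpha$ irrational separately. The prefactor $e^{-t \overline{m}(0, \infty)}$ is smooth in $t$ and bounded near $t = 0^+$, so multiplying it against an asymptotic expansion of the stated form in Definition 3.2 (with $a_k \uparrow \infty$, possibly including $t^k \log t$ terms in the rational case) preserves the asymptotic character: one may either leave the prefactor unexpanded, as done in the statement of the corollary, or expand $e^{-tc} = \sum_j (-c)^j t^j / j!$ and collect terms to obtain a redefined sequence of coefficients. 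Either formulation yields an asymptotic expansion in the sense of Definition 3.2.

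There is essentially no obstacle; the only point worth flagging is that in case (i) the logarithmic terms $\tilde c_k t^k \log t$ must be multiplied by the smooth factor $e^{-tc}$ without producing new non-logarithmic leading singularities, which is immediate because $e^{-tc}$ is analytic at $t = 0$. The constants $c_k$ and $\tilde c_k$ appearing in the corollary can, if desired, be taken to be precisely the constants from Theorem \ref{thm_heat_trace_noninteger_order}, with the exponential prefactor kept outside the sums as written.
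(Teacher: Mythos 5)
Your proof is correct and takes exactly the route the paper intends: the paper declares the corollary "immediate" without giving a proof, and the argument is precisely the one you give, namely $A = \tilde A + \overline{m}(0,\infty) I$ so $e^{-tA} = e^{-t\overline{m}(0,\infty)} e^{-t\tilde A}$, pull the scalar through $TR$, and substitute the expansion from Theorem \ref{thm_heat_trace_noninteger_order}. Your observations about the analyticity of the prefactor near $t=0$ and the option of absorbing its Taylor series into redefined coefficients are sound and match the convention used in the paper's worked example in dimensions $2$ and $3$.
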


Note the strikingly different behaviour for $\alpha$ rational and irrational with the appearance of logarithmic terms. For dimension $n>2$ we explicitly give the lowest-order terms (the case $n=2$ is illustrated in Section \ref{sec_example}):
\begin{equation} \label{eq_lowest_coeffs_heat_trace}
\left.\begin{aligned}
c_0 & =\tfrac{\Gamma(n/ 2\alpha)}{2 \alpha} \tfrac{\Omega_n}{(2\pi)^{n}}  \alpha_0^{-n/2 \alpha} \\
c_1 & =0 \\
c_2 & =-\tfrac{\Gamma\left((n-2)/2 \alpha \right)}{2 \alpha} \tfrac{\Omega_n}{(2\pi)^{n}} \alpha_0^{-(n-2)/2 \alpha-1} \alpha_1 \cdot \tfrac{n-2}{2 \alpha},
\end{aligned}\right\}
\end{equation}
where the $\alpha_k$ are as in \eqref{eq_coefficients}. The lowest-order term $c_0$ also appears in \cite{banuelos2014two} and \cite{park2014trace}, cf. Remark \ref{rem_banuelos_trace}.
The lowest-order non-logarithmic terms of the asymptotic expansion read
\[
\underbrace{\tfrac{\Omega_n}{(2\pi)^{n}}}_{\text{geometry} \atop \text{of $\R^n$}} \cdot
\underbrace{\tfrac{1}{2 \alpha} \left[ \Gamma(\tfrac{n}{2 \alpha}) \alpha_0^{-n/2 \alpha} t^{-n/2 \alpha} -\Gamma\left(\tfrac{n-2}{2 \alpha}\right) \alpha_0^{-(n-2)/2 \alpha-1} \alpha_1 \tfrac{n-2}{2 \alpha} t^{-(n-2)/2 \alpha} + \cdots \right]}_\text{dimension $n$ and probabilistic properties of $B_{X_t}$}.
\]

Note that the terms in the heat trace expansion combine the two main aspects of our subordinated Brownian motion: the flat Euclidean geometry in which the process lives and information about the L\'evy density.

\begin{remark}
We briefly comment on extensions and limitations of our approach.
\begin{enumerate}[(i)]
\item The approach works whenever the generator $A$ (or a suitably shifted generator) belongs to an algebra of classical pseudodifferential operators that has a trace functional and contains the complex powers $A^{-z}$ and the heat operator $e^{-At}$. Thus, on a closed manifold $M$ the same calculations can be carried out demonstrating how both the classical heat invariants and  certain probabilistic information of the subordinator appear explicitly in the heat trace asymptotics.  
\item The definition of the regularized trace $TR$ and the meromorphic extendibility of $\zeta(z)$ require classical symbols. This limits the class of Markov processes that can be treated by this method.
\item Since the symbol calculus of pseudodifferential operators works only modulo smoothing operators (symbols of order $-\infty$), the asymptotic expansion of $\sigma(A)$ cannot see the behaviour of the L\'evy density $m(t)$ for values of $t$ away from 0. Thus, one would have to work in a different operator algebra if one wanted to capture the behaviour of $m$ at both $t=0$ and $t=\infty$.
\end{enumerate}
\end{remark}

The present exposition naturally leads to further questions that are beyond the scope of the present exposition and are the subject of further research.
\begin{enumerate}[(i)]
\item Is there a probabilistic characterization, e.g. in terms of sample path properties, of our class of Bernstein functions?
\item How can one recover the "long end" of the asymptotics of $m$, i.e. as $t \to \infty$ in the spectrum of $A$?
\item What is the probabilistic significance of the logarithmic terms in the heat trace and the dichotomy rational/irrational $\alpha$?
\end{enumerate}

\section{Proofs of the key results} \label{sec_proofs}
This section contains the building blocks needed for the proof of the key results. We have structured the arguments into the following subsections.

\begin{enumerate}
\item Growth, regularity and asymptotics of a class of Bernstein functions
\item Sample path properties
\item Constructing a classical pseudodifferential operator
\item The zeta function and heat trace of the shifted operator
\end{enumerate}

\subsection{Growth, regularity and asymptotics of a class of Bernstein functions}
We derive lower and upper bounds for our class of Bernstein functions, establish their smoothness and give explicit asymptotics.

As a general remark, note that by Theorem 3.9.29 of \cite{jacob2001pseudo}, any Bernstein function can be continuously extended to $[0, \infty)$ and we tacitly assume that all our functions are defined on this larger interval.

The following proposition suggests that we can construct a pseudodifferential operator with elliptic symbol given in terms of $f$ with an asymptotic expansion into homogeneous terms. This will be taken up in detail in the next subsection.

\begin{proposition} \label{prop_growth}
Let $f$ be a Bernstein function satisfying Hypothesis \ref{hypo_main_hypothesis}. Then the following holds.
\begin{enumerate}[(i)]
\item We have $f \in C^\infty([0,\infty))$, i.e. $f$ is smooth at the origin. \label{ass_smoothness}

\item The function $f$ has an asymptotic expansion
\begin{equation} \label{eq_asymptotic_f}
f(\lambda) \sim \overline{m}(0,\infty)-\sum_{k=0}^\infty \Gamma(-\alpha+k) p_k \lambda^{\alpha-k}
\end{equation}
as $\lambda \to \infty$ where $\overline{m}(0, \infty)=\int_0^\infty \left(m(t)-p_0 t^{-1-\alpha}\right) dt$. \label{ass_expansion}

\item The derivative $f^{(l)}$ has an asymptotic expansion
\begin{equation} \label{eq_asymptotic_f_derivative}
f^{(l)}(\lambda) \sim (-1)^{l+1} \sum_{k=0}^\infty \Gamma(-\alpha+l+k) p_k \lambda^{\alpha-l-k}
\end{equation}
as $\lambda \to \infty$ for any $l=1,2, \ldots$ \label{ass_expansion_derivative}

\item There is an $R>0$ such that
\[
- \tfrac{1}{2} \Gamma(-\alpha) p_0 \lambda^\alpha \leq f(\lambda)
\]
for $\lambda>R$. \label{ass_bound_for_ellipticity}

\item There are constants $C_l$ with $|f^{(l)}(\lambda)| \leq C_l \lambda^{\alpha-l}$ for $\lambda \geq 1$ and $l=0,1,2,\ldots$ \label{ass_derivatives}

\end{enumerate}
\end{proposition}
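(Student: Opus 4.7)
The backbone of the proof is Watson's lemma for Laplace transforms, which converts the near-zero expansion of $m$ into an expansion at infinity of $f$ and its derivatives. I will proceed in the order (i) $\Rightarrow$ (iii) $\Rightarrow$ (ii) $\Rightarrow$ (iv) $\Rightarrow$ (v), since the derivative expansion (iii) is the cleanest to get by Watson and everything else follows either by integration or by reading off the leading term.

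For (i), I differentiate $f(\lambda)=\int_0^\infty(1-e^{-\lambda t})m(t)\,dt$ under the integral to obtain, for $l\geq 1$,
\[
f^{(l)}(\lambda)=(-1)^{l+1}\int_0^\infty t^l e^{-\lambda t} m(t)\,dt.
\]
Hypothesis \ref{hypo_main_hypothesis}(i) gives $t^l m(t)=O(t^{l-1-\alpha})$ near $0$, which is integrable since $l\geq 1>\alpha$; at infinity hypothesis (ii) gives rapid decay. Hence $t^l m(t)\in L^1(0,\infty)$ independently of $\lambda\geq 0$, so dominated convergence gives that each $f^{(l)}$ is continuous on $[0,\infty)$, proving smoothness up to the origin.

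For (iii), I apply Watson's lemma to $g_l(t)=t^l m(t)$, which has the asymptotic expansion $g_l(t)\sim\sum_{k\geq 0}p_k t^{l-1-\alpha+k}$ with all exponents $>-1$, and whose tail is of rapid decay so no cut-off obstruction arises. Watson's lemma delivers
\[
\int_0^\infty e^{-\lambda t}t^l m(t)\,dt\sim\sum_{k=0}^\infty p_k\,\Gamma(l-\alpha+k)\,\lambda^{-l+\alpha-k},
\]
which, after multiplying by $(-1)^{l+1}$, is exactly \eqref{eq_asymptotic_f_derivative}. For (ii), the integrals $\int_0^\infty(1-e^{-\lambda t})t^{-1-\alpha+k}\,dt$ diverge at infinity for $k\geq 1$, so I cannot apply Watson's lemma directly to $f$; instead I peel off the leading singularity by writing
\[
f(\lambda)=p_0\int_0^\infty\bigl(1-e^{-\lambda t}\bigr)t^{-1-\alpha}\,dt+\int_0^\infty\bigl(1-e^{-\lambda t}\bigr)\bigl(m(t)-p_0 t^{-1-\alpha}\bigr)\,dt.
\]
The first integral is the classical $-\Gamma(-\alpha)p_0\lambda^\alpha$. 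In the second, $m(t)-p_0 t^{-1-\alpha}\in L^1(0,\infty)$ by hypothesis (iii) and its consequence for the tail, so I split it as $\overline m(0,\infty)-\int_0^\infty e^{-\lambda t}(m(t)-p_0 t^{-1-\alpha})\,dt$, and Watson's lemma applied to the remaining Laplace transform produces the tail $-\sum_{k\geq 1}\Gamma(k-\alpha)p_k\lambda^{\alpha-k}$. Collecting these pieces yields \eqref{eq_asymptotic_f}.

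Finally, (iv) is immediate: by (ii), $f(\lambda)=-\Gamma(-\alpha)p_0\lambda^\alpha+O(\lambda^{\alpha-1})$, and since $\Gamma(-\alpha)<0$ for $\alpha\in(0,1)$ while $p_0>0$ (forced by $m\geq 0$ and $m(t)\sim p_0 t^{-1-\alpha}$ near $0$), the leading coefficient is strictly positive, so one picks $R$ large enough that the remainder is absorbed into the leading term at the cost of a factor $1/2$. Statement (v) for $l\geq 1$ is read off from (iii) by retaining the leading term $(-1)^{l+1}\Gamma(l-\alpha)p_0\lambda^{\alpha-l}$, and the $l=0$ case follows from (ii) once one notes that $\overline m(0,\infty)$ is a bounded constant. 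The only delicate point is the Watson argument for (ii), where the divergence at infinity of the naive homogeneous integrals forces the initial subtraction of $p_0 t^{-1-\alpha}$; everything else is routine.
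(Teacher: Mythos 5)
Your proof is correct and follows essentially the same route as the paper: establish smoothness by differentiating under the integral (moment bounds from rapid decay), derive (iii) by applying Watson's lemma to $g_l(t)=t^l m(t)$, derive (ii) by peeling off the non-integrable leading term $p_0 t^{-1-\alpha}$, evaluating that integral in closed form, and applying Watson to the remainder $\overline m(t)$, and then read off (iv) and (v) from the leading asymptotics. The only differences are cosmetic (you present (iii) before (ii), and you make explicit that $p_0>0$ and $\Gamma(-\alpha)<0$, which the paper leaves implicit); one tiny slip is that the $L^1$-membership of $\overline m=m-p_0 t^{-1-\alpha}$ near $0$ comes from Hypothesis (i) (it behaves like $t^{-\alpha}$ with $-\alpha>-1$), not from Hypothesis (iii), which merely asserts a sign for the already-convergent integral.
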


\begin{remark}
If we allow more general Bernstein functions of the form $f(\lambda)=a+b\lambda+\int_0^\infty (1-e^{-\lambda t}) m(t) dt$ with $a\geq 0$ and $b >0$, then the expansion in (ii) has the additional terms $a+b \lambda$. Also, the lower bound in (iii) reads $\tfrac{1}{2}b \lambda$.
\end{remark}

The key argument for the proof of assertions (\ref{ass_expansion}) and (\ref{ass_expansion_derivative}), from which  (\ref{ass_bound_for_ellipticity}) and  (\ref{ass_derivatives}) follow immediately, is a version of Watson's Lemma that we state without proof.

\begin{proposition}[\cite{bleistein1975asymptotic}, Section 4.1] \label{prop_Watson}
Let $g: (0, \infty) \to \R$ be a bounded and locally integrable function such that as $t \to 0^+$ one has
\[
g(t) \sim \sum_{k=0}^\infty p_k t^{a_k}.
\]
Assume $p_k \in \R$, $a_k>-1$ for all $k$ and $a_k \to \infty$ monotonically as $k \to \infty$. Then
\[
\int_0^\infty e^{- \lambda t} g(t) dt \sim \sum_{k=0}^\infty \Gamma(1+a_k) p_k \lambda^{-a_k-1}
\]
as $\lambda \to \infty$.
\end{proposition}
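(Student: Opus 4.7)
The plan is to split the integral at a fixed cutoff $T>0$ (say $T=1$), treat the tail crudely via the boundedness of $g$, and on the interior use the asymptotic expansion of $g$ term by term, with each term matched against the identity $\int_0^\infty e^{-\lambda t} t^{a}\,dt=\Gamma(1+a)\lambda^{-a-1}$ that is valid for $a>-1$ and $\lambda>0$.

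First I would write $\int_0^\infty e^{-\lambda t}g(t)\,dt = \int_0^T e^{-\lambda t} g(t)\,dt + \int_T^\infty e^{-\lambda t} g(t)\,dt$. Since $g$ is bounded on $[T,\infty)$, the tail integral is dominated by $\|g\|_\infty \lambda^{-1} e^{-\lambda T}$, which decays faster than any power of $\lambda$ and hence contributes nothing to the asymptotic expansion. For the interior, fix $N\geq 0$ and invoke the hypothesis $g(t)\sim \sum_k p_k t^{a_k}$: by definition of asymptotic expansion there exists $\delta\in(0,T]$ and a constant $C_N$ such that
\[
\Bigl|g(t)-\sum_{k=0}^{N} p_k t^{a_k}\Bigr| \leq C_N\, t^{a_{N+1}} \quad\text{for } 0<t\leq \delta.
\]
On $[\delta,T]$ the function $g$ and the finite sum are both bounded, so the same type of bound (with a larger constant) holds on all of $(0,T]$; the key point is the power $t^{a_{N+1}}$ near the origin, where the integrand against $e^{-\lambda t}$ is concentrated.

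Next I would estimate each piece. For each $k\leq N$, write
\[
\int_0^T e^{-\lambda t} t^{a_k}\,dt = \Gamma(1+a_k)\lambda^{-a_k-1} - \int_T^\infty e^{-\lambda t} t^{a_k}\,dt,
\]
and the tail integral is $O(\lambda^{a_k-1} e^{-\lambda T})$, again exponentially small. For the remainder term I would use
\[
\int_0^T e^{-\lambda t}\bigl|g(t)-\sum_{k=0}^N p_k t^{a_k}\bigr|\,dt \;\leq\; C_N \int_0^\infty e^{-\lambda t} t^{a_{N+1}}\,dt \;=\; C_N\,\Gamma(1+a_{N+1})\,\lambda^{-a_{N+1}-1},
\]
where on $[\delta,T]$ I absorb the boundedness into $C_N$. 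Combining the three estimates gives
\[
\int_0^\infty e^{-\lambda t}g(t)\,dt = \sum_{k=0}^N \Gamma(1+a_k) p_k\, \lambda^{-a_k-1} + O\bigl(\lambda^{-a_{N+1}-1}\bigr),
\]
and since $a_{N+1}>a_N$ strictly (monotonicity and $a_k\to\infty$), the remainder is $o(\lambda^{-a_N-1})$, which is exactly the definition of the claimed asymptotic expansion.

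The main obstacle, such as it is, is a bookkeeping issue rather than a conceptual one: one must justify that the pointwise asymptotic $|g(t)-\sum_{k\leq N} p_k t^{a_k}|=o(t^{a_N})$ can be strengthened to the uniform bound $\leq C_N t^{a_{N+1}}$ on a fixed half-neighborhood of $0$. This is immediate from applying the asymptotic hypothesis one order higher, since by assumption the expansion continues with another term $p_{N+1}t^{a_{N+1}}$, so the $(N{+}1)$-st partial sum satisfies a pointwise $o(t^{a_{N+1}})$ estimate; adding back $p_{N+1}t^{a_{N+1}}$ then produces the needed $O(t^{a_{N+1}})$ bound on some small interval, and local boundedness of $g$ handles the rest of $(0,T]$. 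Once this is in place the entire argument is essentially the identity $\int_0^\infty e^{-\lambda t}t^a\,dt = \Gamma(1+a)\lambda^{-a-1}$ combined with the trivial exponential estimate for integrals over $[T,\infty)$.
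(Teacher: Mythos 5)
Your proof is correct, and it is the standard argument for Watson's Lemma (split at a fixed cutoff, use the exact integral $\int_0^\infty e^{-\lambda t}t^a\,dt=\Gamma(1+a)\lambda^{-a-1}$ for $a>-1$, bound the remainder via the next-order term in the expansion, and discard exponentially small tails). The paper itself states this proposition without proof, referring the reader to Bleistein--Handelsman, so there is no in-paper argument to compare against; your write-up is the proof one finds in that reference, and the one subtle point you flag --- upgrading the pointwise $o(t^{a_N})$ estimate to a uniform $O(t^{a_{N+1}})$ bound near $0$ by applying the asymptotic hypothesis one order higher --- is handled correctly.
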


\begin{proof}[Proof of Proposition \ref{prop_growth}]
(\ref{ass_smoothness}) The smoothness of $f$ is a consequence of the rapid decay of $m$ as $t \to \infty$ since then $\int_0^\infty t^l m(t) dt < \infty$ for $l=1, 2, \ldots$. By Theorem 3.9.23 of \cite{jacob2001pseudo} the limit $f(0)=\lim_{t \to 0^+} f(t)$ exists so that $f$ is continuous at the origin. A direct calculation interchanging differentiation and integration shows that $f$ is differentiable on the half-line $[0, \infty)$. Moreover, any derivative of $f$ on $(0, \infty)$ (given by differentiating the Laplace transform of $m$) can be continuously extended to 0 as all higher moments of $m$ exist due to its rapid decay. We can repeat these arguments for any derivative of $f$ so that $f$ is smooth on $[0, \infty)$.

(\ref{ass_expansion}) This follows from Watson's Lemma. We consider
\[
\int_0^\infty \left(1-e^{-\lambda t}\right) m(t) dt = 
\int_0^\infty \left(1-e^{-\lambda t}\right) p_0 t^{-1-\alpha} dt + \int_0^\infty \left(1-e^{-\lambda t}\right) \overline{m}(t) dt
\]
with $\overline{m}(t)=m(t)-p_0 t^{-1-\alpha}$. The first integral can be evaluated and yields
\begin{equation} \label{eq_lowest_term}
\int_0^\infty \left(1-e^{-\lambda t}\right) p_0 t^{-1-\alpha} dt = \tfrac{\Gamma(1-\alpha)}{\alpha} p_0 \lambda^\alpha = -\Gamma(-\alpha) p_0 \lambda^\alpha.
\end{equation}
The second integral can be expanded using Watson's lemma where we have $g(t)=\overline{m}(t)$, $a_0=0$ and $a_k=-1-\alpha+k$ for $k \geq 1$. This yields
\begin{align}
\int_0^\infty \left(1-e^{-\lambda t}\right) \overline{m}(t) dt & = \int_0^\infty \overline{m}(t) dt - \int_0^\infty e^{-\lambda t} \overline{m}(t) dt \nonumber \\
& \sim \overline{m}(0, \infty) - \sum_{k=1}^\infty \Gamma(-\alpha+k) p_k \lambda^{\alpha-k} \label{eq_ass_m_bar}
\end{align}
for $\lambda \to \infty$. Adding \eqref{eq_lowest_term} and \eqref{eq_ass_m_bar} proves the assertion.

(\ref{ass_expansion_derivative}) Note that any derivative of $f$ is given in the form
\[
f^{(l)}(\lambda)=(-1)^{l+1} \int_0^\infty e^{-\lambda t} t^l m(t) dt.
\]
By Hypothesis \ref{hypo_main_hypothesis}, the map $t^l m(t)$ is integrable for any $l \geq 1$: at $t=0$ it grows like $t^{l-1-\alpha}$ and at infinity it is of rapid decay. So by Watson's Lemma 
\[
f^{(l)}(\lambda) \sim (-1)^{l+1} \sum_{k=0}^\infty \Gamma(-\alpha+l+k) p_k \lambda^{\alpha-l-k}
\]
for $\lambda \to \infty$.

(\ref{ass_bound_for_ellipticity}) The bound follows from the lowest-order asymptotics just derived. We find
\[
\lim_{\lambda \to \infty} \lambda^{-\alpha} f(\lambda) = -\Gamma(-\alpha) p_0.
\]
This means that for every $\epsilon>0$ there is some $R>0$ such that
$\left|\lambda^{-\alpha} f(\lambda) -p_0\right| < \epsilon$
for all $\lambda>R$. Now set $\epsilon=-\Gamma(-\alpha)p_0/2$ so that 
\begin{equation} \label{eq_upper_lower_bound}
-\tfrac{1}{2} \Gamma(-\alpha) p_0 < \lambda^{-\alpha} f(\lambda) < -\tfrac{3}{2} \Gamma(-\alpha) p_0
\end{equation}
for all $\lambda>R$. This proves the claim.

(\ref{ass_derivatives}) The bound for $l=0$ follows from \eqref{eq_upper_lower_bound}. The claim for $l \geq 1$ follows by considering $\lim_{\lambda \to \infty} \lambda^{-\alpha+l} f^{(l)}(\lambda)=(-1)^{l+1} \Gamma(-\alpha+l) p_0$ by (\ref{ass_expansion_derivative}) and arguing as in (\ref{ass_bound_for_ellipticity}). One can have $R=1$ by choosing $C_l$ sufficiently large.

\end{proof}

\subsection{Sample path properties}
We prove the assertion on the growth of the sample paths of $B_{X_t}$.

\begin{proof}[Proof of Theorem \ref{thm_short_long_time}]
This is a simple application of the results in \cite{schilling1998growth}. Denote by $\sigma(\xi)=f(|\xi|^2)$ the symbol of the process $B_{X_t}$. As in Example 5.5 of \cite{schilling1998growth} we compute the Blumenthal-Getoor-type indices
\begin{align*}
\beta_\infty & = \inf \left\{r>0 \middle| \lim_{|\xi| \to \infty} \tfrac{\sigma(\xi)}{|\xi|^r} = 0\right\} \\
\delta_\infty & = \inf \left\{r>0 \middle| \liminf_{|\xi| \to \infty} \tfrac{\sigma(\xi)}{|\xi|^r} = 0\right\}. 
\end{align*}
By Proposition \ref{prop_growth} (\ref{ass_expansion}), $f$ has the asymptotic expansion  
\begin{equation} \label{eq_asymptotic_in_x}
f(\lambda) \sim \overline{m}(0, \infty) - \sum_{k=0}^\infty \Gamma(-\alpha+k) p_k \lambda^{\alpha-k}
\end{equation}
as $\lambda \to \infty$. Thus $\beta_\infty=\delta_\infty=2 \alpha$ and the claim follows from Theorem 4.6 of \cite{schilling1998growth}. 
\end{proof}

We also give a proof of Theorem \ref{thm_probab_interpretation} yielding a probabilistic interpretation of the coefficient $a_0$ in the asymptotic expansion of $m$ under Hypothesis \ref{hypo_main_hypothesis}.  This interpretation will hinge on L\'evy's arcsine law. To this end recall the notion of regularly varying functions.

\begin{definition}[\cite{bertoin1998levy}, Chapter 0.7]
A measurable function $f: (0, \infty) \to (0, \infty)$ is \emph{regularly varying} at $\infty$ if for every $\tau>0$ the ratio $f(\tau \lambda)/f(\lambda)$ converges in $(0, \infty)$ as $\lambda \to \infty$. Indeed, there is a real number $\rho$, called the \emph{index} such that
\[
\lim_{\lambda \to \infty} \tfrac{f(\tau \lambda)}{f(\lambda)} = \tau^\rho
\]
for every $\tau>0$.
\end{definition}

\begin{proof}[Proof of Theorem \ref{thm_probab_interpretation}]
By equation \eqref{eq_asymptotic_in_x} we find for $\tau>0$ that
\begin{equation} \label{eq_arcsine_limit}
\tfrac{f(\tau \lambda)}{f(\lambda)} \to \tau^{\alpha}
\end{equation}
as $\lambda \to \infty$ since in \eqref{eq_asymptotic_in_x} the highest power in $\lambda$ is of the order $\alpha>0$. This means that $f$ is regularly varying at $\infty$ with index $\alpha$.

By L\'evy's arcsine law (Theorem III.3.6 of \cite{bertoin1998levy}) we have
\begin{equation} \label{eq_probab_for_a0}
\lim_{x \to 0^+} \tfrac{1}{x} \mathbbm E\left(X_{T(x)^-}\right) = \alpha,
\end{equation}
where $T(x)=\inf\left\{ t \geq 0 | X_t > x \right\}$ is the first passage time strictly above $x$.

The assertion that the coefficient $p_0$ can be described probabilistically is clear since $\lambda^{-\alpha}f(\lambda) \to - \Gamma(-\alpha) p_0$ for $\lambda \to \infty$. As both $a_0$ and $f$ can be described probabilistically in terms of $X_t$ via \eqref{eq_probab_for_a0} and $\mathbbm E\left(e^{-\lambda X_t} \right) = e^{- t f(\lambda)}$, respectively, so can $p_0$:
\[
- \Gamma(-\alpha) p_0 = \lim_{\lambda \to \infty} \lambda^{-\alpha} f(\lambda) = \lim_{\lambda \to \infty} - \lambda^{-\alpha} \tfrac{1}{t} \log \mathbbm E\left(e^{- \lambda X_t} \right)
\]
for $t>0$. Likewise for the other coefficients $p_k$. 
\end{proof}

\subsection{Constructing a classical pseudodifferential operator}
We shift the the generator $A$ by a suitable constant to obtain a classical pseudodifferential operator $\tilde A = A - \overline{m}(0, \infty) I$. The classicality is crucial in order to define the regularized zeta function and generalized heat trace. The heat kernel will be a classical pseudodifferential operator if $\tilde A$ is invertible and parameter elliptic with respect to a sector in the complex plane, cf. Definition \ref{def_elliptic_parameter_elliptic} (\ref{def_parameter_elliptic}). 

\subsubsection{Classicality of the symbol}
It is easy to see that a suitably adjusted version of $f(|\xi|^2)$ is a classical symbol.

\begin{proposition} \label{prop_classical}
Assume Hypothesis \ref{hypo_main_hypothesis} and define a function
\[
\tilde{\sigma}(\xi)=f(|\xi|^2) -\overline{m}(0, \infty).
\]
Then $\tilde{\sigma}$ is a symbol in $S^{2 \alpha, 0}(\R^n)$. Moreover, it belongs to $S^{2 \alpha, 0}_{cl}(\R^n)$, i.e. it is classical with asymptotic expansion 
\begin{equation} \label{eq_sigma_classical_expansion}
\tilde{\sigma}(\xi) \sim - \sum_{k=0}^\infty \Gamma(-\alpha+k) p_k  |\xi|^{2(\alpha-k)}
\end{equation}
in the sense of Definition \ref{def_classical_expansion}. 
\end{proposition}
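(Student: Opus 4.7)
The plan is to verify the symbol estimates directly, using Proposition \ref{prop_growth} as the main engine, and then peel off homogeneous terms one at a time.

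First I would check smoothness and the plain symbol estimate $\tilde\sigma\in S^{2\alpha,0}(\R^n)$. Since $\tilde\sigma$ does not depend on $x$, only $\xi$-derivatives matter. Smoothness of $\tilde\sigma$ on all of $\R^n$ follows from Proposition \ref{prop_growth} \eqref{ass_smoothness}: $f\in C^\infty([0,\infty))$, so $\xi\mapsto f(|\xi|^2)$ is smooth through $\xi=0$. For the growth bound, Proposition \ref{prop_growth} \eqref{ass_derivatives} gives $|f^{(l)}(\lambda)|\leq C_l\lambda^{\alpha-l}$ for $\lambda\geq 1$. Expanding $\partial_\xi^\beta\bigl(f(|\xi|^2)\bigr)$ by the Faà di Bruno / chain rule produces a linear combination of terms
\[
f^{(l)}(|\xi|^2)\cdot\prod_{j=1}^{l}\partial_\xi^{\beta_j}(|\xi|^2),\qquad \beta_1+\cdots+\beta_l=\beta,
\]
and each factor $\partial_\xi^{\beta_j}(|\xi|^2)$ is bounded by $C|\xi|^{2-|\beta_j|}$ for $|\xi|\geq 1$. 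Multiplying the estimates yields $|\partial_\xi^\beta\tilde\sigma(\xi)|\leq C_\beta|\xi|^{2\alpha-|\beta|}$ for $|\xi|\geq 1$. On $|\xi|\leq 1$ smoothness gives a uniform bound, and combining the two regions produces the required estimate with $\langle\xi\rangle$.

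Second I would construct the homogeneous components. Define
\[
\sigma_{2\alpha-2k}(\xi)=-\Gamma(-\alpha+k)\,p_k\,|\xi|^{2(\alpha-k)}\quad (k\geq 0),
\]
which is smooth on $\R^n\setminus\{0\}$ and positively homogeneous of degree $2\alpha-2k$ in $\xi$; the intermediate orders $2\alpha-(2k+1)$ are taken to be zero. To verify the classical expansion in the sense of Definition \ref{def_classical_expansion}, I must show that for every $N\geq 1$ the remainder
\[
R_N(\xi)=\tilde\sigma(\xi)-\sum_{k=0}^{N-1}\chi(\xi)\,\sigma_{2\alpha-2k}(\xi)
\]
lies in $S^{2\alpha-2N,0}(\R^n)$. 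The cutoff $\chi$ makes each subtracted term smooth across the origin, so $R_N$ is smooth, and bounded on $|\xi|\leq 1$.

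The core estimate is for $|\xi|\geq 1$, where $\chi(\xi)=1$ and $R_N(\xi)=h_N(|\xi|^2)$ with
\[
h_N(\lambda)=f(\lambda)-\overline m(0,\infty)+\sum_{k=0}^{N-1}\Gamma(-\alpha+k)\,p_k\,\lambda^{\alpha-k}.
\]
Proposition \ref{prop_growth} \eqref{ass_expansion} says $h_N(\lambda)=O(\lambda^{\alpha-N})$ as $\lambda\to\infty$. For the derivatives, Proposition \ref{prop_growth} \eqref{ass_expansion_derivative} gives the asymptotic expansion of $f^{(l)}$, and termwise differentiation of the partial sum produces exactly the first $N$ terms of that same expansion (this matches because $(d/d\lambda)^l\lambda^{\alpha-k}$ supplies precisely the factor that turns $\Gamma(-\alpha+k)$ into $(-1)^l\Gamma(-\alpha+k+l)$ via $\Gamma$-function identities, giving the sign pattern $(-1)^{l+1}$ after the overall minus sign). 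Hence $h_N^{(l)}(\lambda)=O(\lambda^{\alpha-N-l})$ as $\lambda\to\infty$. Plugging this into the same Faà di Bruno expansion used above yields $|\partial_\xi^\beta R_N(\xi)|\leq C_{N,\beta}\langle\xi\rangle^{2\alpha-2N-|\beta|}$, which is the required symbol estimate.

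The only point requiring genuine care is the bookkeeping in the second step: one must confirm that the asymptotic expansion of $f^{(l)}$ provided by Proposition \ref{prop_growth} \eqref{ass_expansion_derivative} really is the termwise differentiation of the expansion of $f$, so that the partial sums cancel the leading $N$ terms of $h_N^{(l)}$ and not just of $h_N$; this is a routine $\Gamma$-identity verification. Everything else is standard chain-rule estimation.
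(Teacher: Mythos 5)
Your proof is correct and follows the same overall structure as the paper: first Faà di Bruno plus the derivative bounds from Proposition \ref{prop_growth}\eqref{ass_derivatives} to get $\tilde\sigma\in S^{2\alpha,0}$, then a remainder estimate to establish classicality. The one place you diverge from the paper is in how the remainder is handled. The paper rewrites it, for $|\xi|\geq 1$, as $f_N(|\xi|^2)$ with $f_N(\lambda)=\int_0^\infty(1-e^{-\lambda t})\bigl(m(t)-\sum_{k=0}^{N-1}p_k t^{-1-\alpha+k}\bigr)\,dt$, and then re-runs the Watson's-lemma argument on this new Laplace-type integral. You instead keep $h_N(\lambda)=f(\lambda)-\overline m(0,\infty)+\sum_{k<N}\Gamma(-\alpha+k)p_k\lambda^{\alpha-k}$ and obtain $h_N^{(l)}(\lambda)=O(\lambda^{\alpha-N-l})$ directly from Proposition \ref{prop_growth}\eqref{ass_expansion} and \eqref{ass_expansion_derivative}, using the identity $\Gamma(-\alpha+k)\prod_{j=0}^{l-1}(\alpha-k-j)=(-1)^l\Gamma(-\alpha+k+l)$ to see that the termwise derivative of your partial sum matches the first $N$ terms of the expansion of $f^{(l)}$. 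That identity does check out, so your bookkeeping is sound. In fact your formulation has a small advantage: the paper's integral defining $f_N$ does not converge absolutely at $t=\infty$ for $N\geq 2$ (the subtracted powers $t^{-1-\alpha+k}$ with $k\geq 1$ grow while only $m$ decays rapidly), so strictly speaking it needs reinterpretation; your $h_N$ sidesteps that issue entirely by never leaving the level of asymptotic expansions already proved.
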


\begin{proof}
1. We first show that $\tilde{\sigma} \in S^{2 \alpha, 0}(\R^n)$. By Definition \ref{def_symbol} it suffices to show that $f(|\xi|^2)$ is a smooth function on $\R^n$ and that for all multi-indices $\beta \in \N^n_0$ there is a constant $C_\beta$ with
\begin{equation} \label{eq_symbol}
\left| \partial_\xi^\beta f(|\xi|^2) \right| \leq C_\beta \langle \xi \rangle^{2\alpha-|\beta|} 
\end{equation}
for any $\xi \in \Rn$. Recall that $\langle \xi \rangle = \sqrt{1+|\xi|^2}$. 

Now, $f(|\xi|^2)$ is smooth as $f$ is smooth on $[0, \infty)$ by Proposition \ref{prop_growth} (\ref{ass_smoothness}). 

To obtain the estimates in \eqref{eq_symbol} recall Faa di Bruno's formula for the chain rule in the version of equation (6.8) of \cite{shubin2001pseudodifferential}. Let $\beta \in \N^n$ be a multi-index. Then
\[
\partial_\xi^\beta f(|\xi|^2)=\sum_{\gamma_1 + \cdots + \gamma_p=\beta} c_{p, \gamma_1, \ldots, \gamma_p} \left(\partial_\lambda^p f\right)(|\xi|^2) \partial_\xi^{\gamma_1} |\xi|^2 \cdots \partial_\xi^{\gamma_p} |\xi|^2,
\]
where the sum runs over all partitions of the multi-index $\beta$ into sums of nonzero multi-indices $\gamma_1, \ldots, \gamma_p$ for $p=1, 2, \ldots$ As $|\xi|^2$ is a symbol in $S^{2,0}(\R^n)$ we have 
\[
\left|\partial_\xi^{\gamma_i} |\xi|^2 \right| \leq C_{\gamma_i} \langle \xi \rangle^{2-|\gamma_i|}
\]
for some $C_{\gamma_i}$. Also, $|\gamma_1| + \cdots + |\gamma_p|=|\beta|$ by construction so that overall
\[
\left| \partial_\xi^{\gamma_1} |\xi|^2 \cdots \partial_\xi^{\gamma_p} |\xi|^2 \right| \leq C_{\gamma_1, \ldots, \gamma_p} \langle \xi \rangle^{2p-|\beta|}
\]
for a constant $C_{\gamma_1, \ldots, \gamma_p}$. From Proposition \ref{prop_growth} (\ref{ass_bound_for_ellipticity}) we find
\[
\left|\left(\partial_\lambda^p f\right)(|\xi|^2)\right| \leq C_p |\xi|^{2 (\alpha-p)} \leq C_p \langle \xi \rangle^{2 (\alpha-p)}
\]
for some constants $C_p$ so that overall \eqref{eq_symbol} follows.

2. Now prove the asymptotic expansion \eqref{eq_sigma_classical_expansion}. Let $\chi \in C^\infty(\R^n)$ be a cutoff function which is equal to 1 outside $|\xi| \geq 1$ and equal to zero for $|\xi| \leq 1/2$. We must show
\begin{equation*} 
\sigma(\xi) - \overline{m}(0, \infty) + \sum_{k=0}^{N-1} \chi(\xi) \Gamma(-\alpha+k) p_k  |\xi|^{2(\alpha-k)} \in S^{2 \alpha-N, 0}(\R^n)
\end{equation*}
for any $N \geq 1$.

Suppose $|\xi| \geq 1$ and consider $f(|\xi|^2) - \overline{m}(0, \infty) + \sum_{k=0}^{N-1} \Gamma(-\alpha+k) p_k  |\xi|^{2(\alpha-k)}$. We can rewrite this as $f_N(|\xi|^2)$ with
\[
f_N(\lambda)=\int_0^\infty \left(1-e^{-\lambda t}\right)\left(m(t)-\sum_{k=0}^{N-1} p_k t^{-1-\alpha+k}\right) dt.
\]
Now repeat the arguments of Proposition \ref{prop_growth} (\ref{ass_bound_for_ellipticity}) to see that
\[
\left| f_N^{(l)}(\lambda) \right| \leq C_{l, N} \lambda^{\alpha-N-l}
\]
for some constants $C_{l, N}$. An argument as in 1. shows that $f_N$ belongs to $S^{2\alpha-N,0}(\R^n)$ proving the claim. 
\end{proof}

\subsubsection{Ellipticity of the symbol}
We note the (parameter-)ellipticity of a shifted version of the Bernstein function.

\begin{proposition} \label{prop_elliptic}
Assume Hypothesis \ref{hypo_main_hypothesis} and set $\tilde{\sigma}(\xi)=f(|\xi|^2) -\overline{m}(0, \infty)$.
Then $\tilde{\sigma}$ is elliptic. Moreover, for $\theta \in (\pi/4, \pi/2)$ the symbol $\tilde\sigma$ is $\Lambda$-elliptic.
\end{proposition}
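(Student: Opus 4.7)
The plan is to verify the two conditions of Definition \ref{def_elliptic_parameter_elliptic} directly, exploiting the fact that $\tilde\sigma$ is a real-valued, strictly positive function of $\xi$.

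For ordinary ellipticity, I would split into large and small $|\xi|$. When $|\xi|^2 > R$, with $R$ as in Proposition \ref{prop_growth}(\ref{ass_bound_for_ellipticity}), we already have $f(|\xi|^2) \geq -\tfrac{1}{2} \Gamma(-\alpha) p_0 |\xi|^{2\alpha}$; subtracting $\overline m(0,\infty) < 0$ only improves this bound, and on this region $|\xi|^{2\alpha}$ and $\langle\xi\rangle^{2\alpha}$ are comparable up to a constant. When $|\xi|^2 \leq R$, $f(|\xi|^2) \geq 0$ since $f$ is Bernstein, so $\tilde\sigma(\xi) \geq -\overline m(0,\infty) > 0$, and here $\langle\xi\rangle^{2\alpha}$ is bounded. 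Choosing the smaller of the two implied constants yields $C > 0$ with $\tilde\sigma(\xi) \geq C \langle\xi\rangle^{2\alpha}$ for all $\xi \in \R^n$.

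For $\Lambda$-ellipticity the argument is purely geometric. Condition (a) of Definition \ref{def_elliptic_parameter_elliptic}(\ref{def_parameter_elliptic}) holds for any $\theta \in (0,\pi)$, because $\tilde\sigma(\xi)$ lies on the strictly positive real axis whereas $\Lambda$ is disjoint from it by construction. For condition (b), I would show that the distance from any $s>0$ to any point $\lambda \in \Lambda$ is at least $s \sin\theta$. Writing $\lambda = \rho e^{i\varphi}$ with $\varphi \in [\theta, 2\pi-\theta]$ and minimising the quadratic
\[
|\lambda - s|^2 = \rho^2 - 2 \rho s \cos\varphi + s^2
\]
over $\rho \geq 0$, one finds the minimum equal to $s^2 \sin^2\varphi \geq s^2 \sin^2\theta$ when $\cos\varphi \geq 0$ (attained at $\rho = s \cos\varphi$), and equal to $s^2 \geq s^2 \sin^2\theta$ when $\cos\varphi < 0$ (attained at $\rho = 0$). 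Applying this with $s = \tilde\sigma(\xi)$ and combining with the ellipticity bound just established gives
\[
\bigl|(\lambda - \tilde\sigma(\xi))^{-1}\bigr| \leq \frac{1}{\tilde\sigma(\xi) \sin\theta} \leq \frac{1}{C \sin\theta}\, \langle\xi\rangle^{-2\alpha},
\]
uniformly in $\lambda \in \Lambda$ and $\xi \in \R^n$.

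There is no real obstacle beyond careful splitting of the $\xi$-space; all the analytic content is already in Proposition \ref{prop_growth}, and the remainder is elementary planar geometry together with positivity of $f$ and of $-\overline m(0,\infty)$. Note that the restriction $\theta \in (\pi/4, \pi/2)$ is not genuinely needed for $\Lambda$-ellipticity as such — any $\theta \in (0,\pi)$ would suffice for the above argument — and presumably reflects a choice of sector convenient for the subsequent constructions of complex powers and heat kernel in the $SG$-calculus.
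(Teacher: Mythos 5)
Your proof is correct. The ellipticity half follows the paper almost verbatim: split at $|\xi|^2 = R$ with $R$ from Proposition \ref{prop_growth}~(\ref{ass_bound_for_ellipticity}), use the lower bound on $f$ together with comparability of $|\xi|^{2\alpha}$ and $\langle\xi\rangle^{2\alpha}$ for large $\xi$, use $f\geq 0$ and $-\overline{m}(0,\infty)>0$ for bounded $\xi$, and take the minimum of the two implied constants.

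For the $\Lambda$-ellipticity you take a cleaner route than the paper. The paper writes $\lambda=\lambda_1+i\lambda_2$ and case-splits on the sign of $\lambda_1$: for $\lambda_1\leq 0$ one has $(\lambda_1-\tilde\sigma(\xi))^2\geq\tilde\sigma(\xi)^2$ directly, while for $\lambda_1>0$ the hypothesis $\theta>\pi/4$ is used to deduce $\lambda_2^2\geq\lambda_1^2$, and minimizing $(\lambda_1-c)^2+\lambda_1^2$ in $\lambda_1$ gives the bound $c^2/2$. You instead parametrize $\lambda=\rho e^{i\varphi}$, minimize $|\lambda-s|^2$ first over $\rho\geq 0$ and then over $\varphi$, and obtain the exact distance $s\sin\theta$ from a positive real $s$ to the sector. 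This avoids the case split, produces the sharp constant, and --- as you correctly observe --- shows that the restriction $\theta>\pi/4$ is not required for $\Lambda$-ellipticity at all; it is an artifact of the paper's particular estimate. (The upper bound $\theta<\pi/2$ is the one that matters for the subsequent Dunford-integral construction of $e^{-t\tilde A}$, so stating the result on $(\pi/4,\pi/2)$ does no harm, but the resolvent estimate itself holds for any $\theta\in(0,\pi)$.)
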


\begin{proof}
1. For the ellipticity we must show that there is a $C>0$ with 
\begin{equation} \label{eq_elliptic}
C \langle \xi \rangle^{2\alpha} \leq \tilde{\sigma}(\xi)
\end{equation}
for any $\xi \in \Rn$. 

By Proposition \ref{prop_growth} (\ref{ass_bound_for_ellipticity}) there is an $R>0$ such that for $|\xi|^2 > R$ we have $C_1 |\xi|^{2\alpha} < \sigma(\xi)$ where $C_1 = - \tfrac{1}{2} \Gamma(-\alpha) p_0$. Without loss of generality $R>1$ so that for $|\xi|^2 > R$ we have $C_2 \langle \xi \rangle^{2 \alpha} \leq \sigma(\xi)$ with $C_2=(\tfrac{1}{2})^\alpha C_1$ as $\tfrac{1}{2} \langle \xi \rangle^2 \leq |\xi|^2$ for $|\xi| \geq 1$. There is an $R' \geq R$ such that
\begin{align*}
\tfrac{1}{2} C_2 \langle \xi \rangle^{2\alpha} & \leq C_2 \langle \xi \rangle^{2\alpha} - \overline{m}(0,\infty) \\
& \leq f(|\xi|^2) - \overline{m}(0, \infty) \\
& = \tilde{\sigma}(\xi)
\end{align*}
for $|\xi|^2 \geq R'$.  

If $|\xi|^2 \leq R'$ we have
\[
-\overline{m}(0, \infty) \langle R' \rangle ^{-2\alpha} \langle \xi \rangle^{2\alpha} \leq -\overline{m}(0, \infty) \leq \tilde{\sigma}(\xi).
\]
Thus, \eqref{eq_elliptic} holds with $C=\min\{\tfrac{1}{2}C_2; -\overline{m}(0, \infty) \langle R' \rangle^{-2\alpha}\}$.  

2. Now for the parameter-ellipticity. We must show that 
\[
|(\lambda-\sigma(\xi))^{-1}| \leq C \langle \xi \rangle ^{-2\alpha}
\]
for some $C>0$ and all $\xi \in \Rn$. Equivalently $C^{-1} \langle \xi \rangle^{4 \alpha} \leq |\lambda - \sigma(\xi)|^2$. Let $\lambda \in \Lambda$ with $\lambda=\lambda_1 + i \lambda_2$ and $\lambda_i \in \R$. We distinguish cases according to the sign of $\lambda_1$.

Case $\lambda_1 \leq 0$: we find
\begin{align*}
|\lambda-\tilde\sigma(x,\xi)|^2 & = (\lambda_1-\sigma(\xi))^2+\lambda_2^2 \\
& \geq \tilde\sigma(\xi)^2 \\
& \geq C' \langle \xi \rangle^{4 \alpha}
\end{align*}
for some constant $C'$ by ellipticity and since $\lambda_1$ and $-\sigma$ have the same sign.

Case $\lambda_1 > 0$: here $\lambda_2 \geq \lambda_1$ since we assumed $\theta>\pi/4$. Thus,
\begin{align*}
|\lambda-\tilde\sigma(\xi)|^2 & = (\lambda_1-\sigma(\xi))^2+\lambda_2^2 \\
& \geq (\lambda_1 - \tilde\sigma(\xi))^2 + \lambda_1^2 \\
& \geq \tfrac{1}{2} \tilde\sigma(\xi)^2 \\
& \geq \tfrac{1}{2} C' \langle \xi \rangle^{4 \alpha}
\end{align*}
for some $C'$ by ellipticity. Here we use the fact that the function $g(x)=(x-c)^2+x^2$ for $c>0$ has a minimum located at $x=c/2$ with value $g(c/2)=c^2/2$. 
\end{proof}

\subsubsection{The spectrum of the operator}
We collect useful facts about the spectrum of the operator $\tilde A=A-\overline{m}(0, \infty)$.

\begin{proposition} \label{prop_spectrum}
Assume Hypothesis \ref{hypo_main_hypothesis} and set $\tilde{\sigma}(\xi)=f(|\xi|^2) -\overline{m}(0, \infty)$. Let $\tilde A$ be the pseudodifferential operator with symbol $\tilde\sigma$.
\begin{enumerate}[(i)] 
\item $\tilde{A}$ is essentially selfadjoint, i.e. has self-adjoint closure $\overline{\tilde{A}}$.
\item The spectrum of $\overline{\tilde{A}}$ is contained in the positive real line so that in particular $\tilde{A}$ is invertible: $\Sp\left(\overline{\tilde{A}}\right) \subset \left[-\overline{m}(0, \infty), \infty\right)$. \label{spectrum}
\end{enumerate}
\end{proposition}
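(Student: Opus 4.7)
The plan is to exploit the fact that $\tilde A$ is a Fourier multiplier: for $u\in \mathcal S(\R^n)$ one has $\widehat{\tilde A u}(\xi)=\tilde\sigma(\xi)\hat u(\xi)$, so $\tilde A$ is unitarily equivalent, via the Fourier transform $\mathcal F$, to the multiplication operator $M_{\tilde\sigma}$ initially defined on $\mathcal F(\mathcal S(\R^n))=\mathcal S(\R^n)$. Since $\tilde\sigma$ is real-valued and smooth by construction and Proposition \ref{prop_classical}, the maximal multiplication operator $M_{\tilde\sigma}^{\max}$ with domain $D=\{v\in L^2(\R^n)\mid \tilde\sigma v\in L^2(\R^n)\}$ is self-adjoint. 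Hence it suffices to prove essential self-adjointness of $M_{\tilde\sigma}$ on $\mathcal S(\R^n)$ and then identify its spectrum with the essential range of $\tilde\sigma$.

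For assertion (i), I would show that $\mathcal S(\R^n)$ is a core for $M_{\tilde\sigma}^{\max}$. Given $v\in D$, approximate $v$ in the graph norm by Schwartz functions as follows: pick a cutoff $\chi_R\in C_c^\infty(\R^n)$ equal to $1$ on $|\xi|\le R$ and $0$ on $|\xi|\ge 2R$, and set $v_R=\chi_R v$ after a further mollification to make it smooth. Then $v_R\in \mathcal S(\R^n)$ because $v\in L^2$ and $\chi_R$ is compactly supported smooth (after mollifying in $L^2$, compact support is preserved). Using dominated convergence with dominating function $(1+|\tilde\sigma(\xi)|)|v(\xi)|\in L^2$, one obtains $v_R\to v$ and $\tilde\sigma v_R\to \tilde\sigma v$ in $L^2$, which is exactly graph-norm convergence. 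Pulling back through $\mathcal F^{-1}$, Schwartz functions are a core for $\tilde A$'s closure, so $\overline{\tilde A}$ is self-adjoint.

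For assertion (ii), the spectrum of $\overline{\tilde A}$ equals the spectrum of $M_{\tilde\sigma}^{\max}$, which is the essential range of $\tilde\sigma$. Because $\tilde\sigma$ is continuous on the connected set $\R^n$, the essential range coincides with the topological closure of its image. Now, since $f$ is a Bernstein function with $a=b=0$, one has $f(0)=0$, so $\tilde\sigma(0)=-\overline{m}(0,\infty)>0$, and more generally $\tilde\sigma(\xi)\ge -\overline{m}(0,\infty)$ because $f(|\xi|^2)\ge 0$ for all $\xi$. Proposition \ref{prop_elliptic} together with $\tilde\sigma\ge 0$ implies $\tilde\sigma(\xi)\to\infty$ as $|\xi|\to\infty$, so by continuity the image of $\tilde\sigma$ is exactly $[-\overline{m}(0,\infty),\infty)$. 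This proves $\Sp(\overline{\tilde A})=[-\overline{m}(0,\infty),\infty)$, and in particular $0$ is not in the spectrum, so $\tilde A$ is invertible.

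The only mildly technical point is the core argument in (i); everything else reduces to the observation that a pure Fourier multiplier is unitarily equivalent to a multiplication operator and that $\tilde\sigma$ is a real, continuous, coercive function of $\xi$. Note that Hypothesis \ref{hypo_main_hypothesis}(iii), which makes $-\overline{m}(0,\infty)$ strictly positive, is exactly what guarantees $0\notin\Sp(\overline{\tilde A})$ and hence the invertibility needed for defining complex powers $\tilde A^{-z}$ in the sequel.
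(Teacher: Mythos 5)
Your proof is correct and even reaches a slightly sharper conclusion than the paper's, identifying $\Sp(\overline{\tilde A})$ as \emph{exactly} $[-\overline{m}(0,\infty),\infty)$ rather than just contained in it. Both arguments hinge on the same observation that $\tilde A$ is a Fourier multiplier, hence unitarily equivalent via $\mathcal F$ to multiplication by the real-valued $\tilde\sigma$, but the execution differs. For (i) the paper simply notes $\tilde A$ is closable and symmetric and invokes Proposition~1.3 of \cite{boggiatto1996global} to conclude $\overline{\tilde A}$ is self-adjoint; you instead prove $\mathcal S(\R^n)$ is a core for the maximal multiplication operator directly. For (ii) the paper observes $\Sp(\overline{\tilde A})\subseteq[0,\infty)$ and then applies the quantitative self-adjointness criterion $\|(\tilde A-\lambda)u\|\geq c_\lambda\|u\|$ (Proposition~3.10 of \cite{schmudgen2012unbounded}) together with Plancherel and the pointwise bound $\tilde\sigma\geq -\overline{m}(0,\infty)$; you instead identify the spectrum with the essential range of $\tilde\sigma$, which for a continuous coercive function on $\R^n$ is just the closed image. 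Your route is more self-contained and explicit; the paper's is shorter by outsourcing to references. One small imprecision in your core argument: after mollification, $(\chi_R v)\ast\phi_\epsilon$ is not pointwise dominated by $|v|$, so the dominated-convergence step as written does not quite apply. The standard fix is to decouple: first let $R\to\infty$ with the unmollified $\chi_R v$ (dominated by $|v|+|\tilde\sigma v|\in L^2$), then for each fixed $R$ mollify $\chi_R v$ and use that $\tilde\sigma$ is bounded on the compact support to control the graph norm. This is routine and does not affect your conclusion.
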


Note that the closure of $\tilde{A}$ generates an analytic semigroup which by abuse of notation we denote by $e^{-t\tilde{A}}$. The heat operator is a classical pseudodifferential operator. 

\begin{corollary}
Under Hypothesis \ref{hypo_main_hypothesis} the operator $-\tilde{A}$ extends to the generator of an analytic semigroup $e^{-t\tilde{A}}$ on $L^2(\Rn)$. The heat operator $e^{-t\tilde{A}}$ is a classical pseudodifferential operator with symbol $\sigma(e^{-t \tilde{A}})(\xi)=e^{-t \tilde{\sigma}(|\xi|^2)}$ belonging to $S^{-\infty, 0}_{cl}(\Rn)$.
\end{corollary}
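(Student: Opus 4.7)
The plan is to combine three ingredients from the preceding propositions: the self-adjointness from Proposition \ref{prop_spectrum}, the parameter-ellipticity from Proposition \ref{prop_elliptic}, and the classical expansion of $\tilde\sigma$ from Proposition \ref{prop_classical}.

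First I would establish the analytic semigroup. By Proposition \ref{prop_spectrum}(ii) the closure $\overline{\tilde A}$ is self-adjoint with spectrum contained in $[-\overline{m}(0,\infty), \infty)$, which lies on the positive real axis by Hypothesis \ref{hypo_main_hypothesis}(iii). Combined with the $\Lambda$-ellipticity of $\tilde\sigma$ from Proposition \ref{prop_elliptic}, which produces a symbolic bound $|(\lambda - \tilde\sigma(\xi))^{-1}| \leq C \langle \xi \rangle^{-2\alpha}$ and hence (via $L^2$-boundedness of Fourier multipliers) a resolvent estimate $\|(\lambda I - \overline{\tilde A})^{-1}\|_{L^2 \to L^2} \leq C/|\lambda|$ for $\lambda \in \Lambda$, standard sectoriality criteria (e.g.\ Pazy, Theorem 2.5.2) yield that $-\overline{\tilde A}$ generates an analytic semigroup on $L^2(\Rn)$.

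Next I identify the symbol. Since $\tilde\sigma$ depends only on $\xi$, the operator $\tilde A$ is a Fourier multiplier, and the spectral functional calculus commutes with the Fourier transform: for $u \in \mathcal S(\Rn)$ one has $\widehat{e^{-t\tilde A} u}(\xi) = e^{-t\tilde\sigma(\xi)}\,\hat u(\xi)$. This directly shows $\sigma(e^{-t\tilde A})(\xi) = e^{-t\tilde\sigma(\xi)}$.

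Finally I verify that this symbol lies in $S^{-\infty,0}_{cl}(\Rn)$. The ellipticity estimate $\tilde\sigma(\xi) \geq C \langle \xi \rangle^{2\alpha}$ gives $|e^{-t\tilde\sigma(\xi)}| \leq e^{-tC \langle \xi \rangle^{2\alpha}}$, which decays faster than any polynomial. For derivative estimates, Faa di Bruno applied to $\partial_\xi^\beta e^{-t\tilde\sigma(\xi)}$ produces a polynomial in the derivatives $\partial_\xi^\gamma \tilde\sigma$ with $|\gamma|\leq|\beta|$ multiplied by $e^{-t\tilde\sigma(\xi)}$; each factor is bounded by $C_\gamma \langle \xi \rangle^{2\alpha-|\gamma|}$ using the estimates from the proof of Proposition \ref{prop_classical}, and the polynomial growth is defeated by the exponential, placing $e^{-t\tilde\sigma} \in S^{-\infty,0}(\Rn)$. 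For classicality I would factor
\[
e^{-t\tilde\sigma(\xi)} = e^{-t\alpha_0 |\xi|^{2\alpha}} \cdot \exp\!\bigl(-t\,(\tilde\sigma(\xi) - \alpha_0|\xi|^{2\alpha})\bigr)
\]
and Taylor-expand the second factor for large $|\xi|$ using the homogeneous expansion of $\tilde\sigma$ from Proposition \ref{prop_classical}; this reproduces the expansion stated in Theorem \ref{thm_generator_as_pseudo}(ii). The main technical obstacle is to verify that truncating this Taylor series at order $N$ leaves a remainder in $S^{-N,0}$ with symbol seminorms that are locally uniform in $t$, which comes from applying Taylor's theorem with integral remainder inside the symbol estimates and invoking the exponential decay of $e^{-t\alpha_0|\xi|^{2\alpha}}$ to absorb the polynomial factors arising from differentiation.
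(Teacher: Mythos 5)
Your proof is correct but takes a genuinely different route from the paper. The paper argues in two steps: first, self-adjointness of $\overline{\tilde A}$ plus the spectral inclusion $\Sp(\overline{\tilde A}) \subset [-\overline{m}(0,\infty),\infty)$ immediately gives the analytic semigroup via a standard generation theorem (Engel--Nagel, Theorem II.4.6), with no need to extract an explicit resolvent bound from the symbol estimate; second, the fact that $e^{-t\tilde A}$ is a classical $SG$-pseudodifferential operator is obtained wholesale by invoking Theorem 4.1 of the Maniccia--Schrohe--Seiler paper, which constructs the heat operator's classical symbol expansion via the parameter-dependent parametrix and a Dunford integral. You instead compute the symbol directly from the spectral functional calculus for Fourier multipliers (giving the explicit formula $e^{-t\tilde\sigma(\xi)}$ at once, which the paper states but does not re-derive) and then verify membership in $S^{-\infty,0}_{cl}$ by hand via ellipticity, Faa di Bruno, and a factorization plus Taylor expansion of $\exp(-t(\tilde\sigma - \alpha_0|\xi|^{2\alpha}))$. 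Your approach is more elementary and transparent, and it explains why the expansion in Theorem \ref{thm_generator_as_pseudo}(ii) has the shape it does; what it costs is the work you flag yourself, namely making precise the notion of classicality for these $t$-dependent, exponentially decaying symbols and controlling the remainder estimates uniformly in $t$, which the paper outsources to the cited Theorem 4.1. Both proofs are sound; the paper's is shorter because it leans on existing $SG$-machinery, yours is self-contained at the price of re-deriving a piece of that machinery in this special ($x$-independent, Fourier-multiplier) case.
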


\begin{proof}
The spectrum of $\overline{\tilde{A}}$ is entirely contained in the interval $[-\overline{m}(0, \infty), \infty)$ by Proposition \ref{prop_spectrum} (\ref{spectrum}). Thus, the spectrum of $-\overline{\tilde{A}}$ is contained in a sector in the left half of the complex plane and the analytic semigroup $e^{-t \tilde{A}}$ exists by Theorem II.4.6 of \cite{engel2000one}.

The fact that $e^{-t \tilde{A}}$ is a pseudodifferential operator follows from a simple application of Theorem 4.1 of \cite{maniccia2013determinants} whose hypotheses are satisfied: the symbol $\tilde{\sigma}$ is of positive order in $\xi$ and order $0$ in $x$. Moreover, it is $\Lambda$-elliptic by Proposition \ref{prop_elliptic}; and the resolvent of $\overline{\tilde{A}}$ exists in the whole sector $\Lambda$ as $0 \not \in \Sp(\tilde{A})$ by Proposition \ref{prop_spectrum} (\ref{spectrum}). 
\end{proof}

\begin{proof}[Proof of Proposition \ref{prop_spectrum}]
(i) By standard arguments (cf. for example Chapter 1 of \cite{schmudgen2012unbounded} or Chapter 2.1 of \cite{boggiatto1996global}), $\tilde A$ is closable. Also, $\tilde A$ is symmetric, i.e. $(\tilde A u, v)=(u, \tilde A v)$ in $L^2(\R^n)$ for any $u, v \in \mathcal S(\R^n)$ since $\tilde \sigma$ is real-valued and independent of $x$. By Proposition 1.3 of \cite{boggiatto1996global}, $\overline{\tilde A}$ is selfadjoint if and only if $\tilde A$ is symmetric. 

(ii) Spectrum of $\tilde{A}$: clearly, $\Sp\left(\overline{\tilde{A}}\right) \subseteq [0, \infty)$. Proposition 3.10 (ii) of \cite{schmudgen2012unbounded} implies that for a selfadjoint densely defined operator $T$ we have $\lambda \not\in \Sp(T)$ if and only if $||(T-\lambda)u|| \geq c_\lambda ||u||$ for some constant $c_\lambda>0$ and all $u$ in the domain of $T$. 
In our case note that for $u \in \mathcal S(\Rn)$ we have $||\tilde{A} u||^2 \geq (\overline{m}(0,\infty))^2 ||u||^2$ by Plancherel's theorem as the symbol of $\tilde{A}$ is bounded below by $-\overline{m}(0,\infty)$ so the claim follows.

\end{proof}

\subsection{Zeta function and heat trace of the shifted generator}
For a classical pseudodifferential operator we compute the regularized zeta-function and the generalized heat trace. 

\begin{proof}[Proof of Theorem \ref{thm_generator_as_pseudo}]
(i) The assertions follow from Propositions \ref{prop_classical} and \ref{prop_elliptic}.

(ii) We set $\alpha_k = -p_k \Gamma(-\alpha+k)$ for $k=0,1,2, \ldots$ for short and apply Theorem 4.1 of \cite{maniccia2013determinants}. Let $\Lambda$ be a sector as in Definition \ref{def_elliptic_parameter_elliptic} (\ref{def_parameter_elliptic}) with $\theta$ as in Proposition \ref{prop_elliptic}.

To find the symbol of the heat operator we first construct the symbol expansion of the parameter-dependent parametrix using the algorithm of Proposition 3.2 of \cite{maniccia2006complex} for symbols independent of $x$. Recall that by a \emph{parameter-dependent parametrix} of a pseudodifferential operator $\tilde A$ with symbol $\tilde \sigma(\xi) \in S^{r,0}$ we mean an inverse modulo smoothing operators, i.e. operators with symbol in $S^{-\infty, 0}(\R^n)$. If we denote the symbol of the parametrix by $b(\xi, \lambda)$ then on the level of symbols we require that $(\lambda-\tilde\sigma(\xi)) b(\xi, \lambda)-1 \in S^{-N, 0}(\R^n)$ for any $N \in \N$. The ansatz $b(\xi, \lambda) \sim b_{-r}(\xi, \lambda) + b_{-r-1}(\xi, \lambda)+ b_{-r-2}(\xi, \lambda) + \cdots$ leads to the relation of formal power series
\[
\left[(\lambda-\tilde\sigma_r(\xi)) + \tilde\sigma_{r-1}(\xi) +\cdots \right] \left[b_{-r}(\xi, \lambda) + b_{-r-1}(\xi, \lambda)+ \cdots \right] = 1.
\]
Roughly speaking one collects terms according to the degree of homogeneity in $\xi$ where $\lambda-\tilde\sigma_r$ is treated as order $r$.

In our case with $r=2\alpha$ the first terms are given as
\begin{equation} \label{eq_explicit_parametrix}
\left. \begin{aligned}
b_{-2 \alpha}(\xi, \lambda) & = \tfrac{1}{\lambda - \alpha_0 |\xi|^{2 \alpha}} \\
b_{-2 \alpha-1}(\xi, \lambda) & = 0 \\
b_{-2 \alpha-2}(\xi, \lambda) & = \tfrac{\alpha_1 |\xi|^{2 \alpha-2}}{(\lambda - \alpha_0 |\xi|^{2 \alpha})^2} \\
b_{-2 \alpha-3}(\xi, \lambda) & = 0 \\
b_{-2 \alpha-4}(\xi, \lambda) & = \tfrac{\alpha_2 |\xi|^{2 \alpha-4}}{(\lambda-\alpha_0 |\xi|^{2 \alpha})^2} + \tfrac{\alpha_1^2 |\xi|^{4 \alpha-4}}{(\lambda-\alpha_0 |\xi|^{2 \alpha})^3}
\end{aligned} \right\rbrace
\end{equation}
The terms of order $-2 \alpha-k$ with $k$ odd all vanish because the asymptotic expansion of $\tilde\sigma$ contains no terms of order $2\alpha-k$ with $k$ odd. 

We can then determine the symbol expansion for the heat operator using the above parametrix to obtain
\[
\sigma(e^{-t\tilde{A}})(\xi) \sim \sum_{k=0}^\infty \frac{1}{2 \pi i} \int_{\partial \Lambda} e^{-t \lambda} b_{-2 \alpha-k}(\xi, \lambda) d\lambda
\]
by Theorem 4.1 of \cite{maniccia2013determinants}, where $\partial \Lambda$ is a parametrization of the boundary of the sector $\Lambda$. To the above lowest order terms, the symbol is thus given as
\begin{align} \label{eq_explicit_heat_expansion}
\sigma(e^{t\tilde{A}})(\xi) \sim & e^{-t \alpha_0 |\xi|^{2 \alpha}} + \left[-\alpha_1 |\xi|^{2 \alpha-2}-\alpha_2 |\xi|^{2 \alpha-4} \right] t e^{-t \alpha_0 |\xi|^{2 \alpha}} \nonumber \\
& + \tfrac{1}{2} \alpha_1^2 |\xi|^{4 \alpha-4} t^2 e^{-t \alpha_0 |\xi|^{2 \alpha}} + \cdots
\end{align}
which follows by integration by parts (detailed computations in Section \ref{sec_example}). 
\end{proof}

We can perform a similar computation for the $\zeta$-function of the operator. 

\begin{proof}[Proof of Theorem \ref{thm_zeta_noninteger_order}]
Let $\Lambda$ be a sector as in Definition \ref{def_elliptic_parameter_elliptic} (\ref{def_parameter_elliptic}) with $\theta$ as in Proposition \ref{prop_elliptic}. For the construction of the complex powers of $\tilde A$ with classical symbol $\sigma$ we must have that
\begin{enumerate}[(i)]
\item $\lambda-\tilde A$ is invertible for all $0 \neq \lambda \in \Lambda$ and
\item $\lambda=0$ is at most an isolated spectral point.
\end{enumerate}
These assumptions are satisfied in our case by Proposition \ref{prop_spectrum} (\ref{spectrum}). Then define the complex powers $\tilde A^{-z}$ for $z \in \C, \re z > 0$ by a Dunford integral
\[
\tilde A^{-z}=\tfrac{1}{2 \pi i} \int_{\partial \Lambda_\epsilon} \lambda^{-z} (\lambda-\tilde A)^{-1} d\lambda
\]
for $\re z > 0$ where $\Lambda_\epsilon=\Lambda \cup \{|\lambda| \leq \epsilon\}$ and $\partial \Lambda_\epsilon$ is a parametrization of the boundary of $\Lambda_\epsilon$ with the circular part traversed clockwise. We refer to \cite{maniccia2006complex}, Section 3.2 to see that $\tilde A^{-z}$ is a pseudodifferential operator in $S^{2\alpha z, 0}(\Rn)$ with symbol $\sigma(\xi; z)$ and
\[
\sigma(\xi; z)=\tfrac{1}{2 \pi i} \int_{\partial \Omega_{\langle \xi \rangle}} \lambda^{-z} (\lambda-\sigma(\xi))^{-1} d \lambda.
\]
Here, $\Omega_{\langle \xi \rangle} = \left\{z \in \C \setminus \Lambda \middle | \tfrac{1}{c} \langle \xi \rangle^r < |z| < c \langle \xi \rangle^r \right\}$ for a suitable constant $c \geq 1$ such that $\sigma(\xi) \in \Omega_{\langle \xi \rangle}$ for any $\xi \in \R^n$, cf. Lemma 3.2 of \cite{maniccia2006complex}. The symbol is defined modulo smoothing symbols in $S^{-\infty, 0}(\Rn)$ depending analytically on $z$. 

The asymptotic symbol expansion of $\tilde A^{-z}$ is given for $\re z>0$ as
\[
\sigma\left(\tilde A^{-z}\right)(\xi) \sim  \sigma_{-2 \alpha z}(\xi; z) + \sigma_{-2 \alpha z-2}(\xi; z) + \sigma_{-2 \alpha z-4}(\xi; z) + \cdots
\]
with
\begin{equation} \label{eq_explicit_zeta_expansion}
\left. \begin{aligned}
\sigma_{-2 \alpha z}(\xi; z) & = \alpha_0^{-z} |\xi|^{-2 \alpha z} \\
\sigma_{-2 \alpha z-2}(\xi; z) & = -\alpha_0^{-z-1} \alpha_1 z |\xi|^{-2 \alpha z-2} \\
\sigma_{-2 \alpha z-4}(\xi; z) & = \left[-\alpha_0^{-z-1} \alpha_2 z  + \tfrac{1}{2}\alpha_0^{-z-2} \alpha_1^2 z (z+1)\right] |\xi|^{-2 \alpha z-4}
\end{aligned} \right\rbrace
\end{equation}
obtained by integration by parts (exemplary computations in Section \ref{sec_example}). 

Now for the regularized zeta function $\zeta(z)=TR(\tilde A^{-z})$. By Theorem 2.7 of \cite{maniccia2013determinants}, this function is analytic for $\re z > n/2\alpha$ and can be meromorphically extended to the entire complex plane with at most simple poles at the points $z_k=(n-k)/2\alpha$ for $k=0, 1, 2, \ldots$. The behaviour of $\zeta(z)$ near $z_k$ is described as
\begin{equation} \label{eq_defn_TR}
\tfrac{(2\pi)^{-n}}{2\alpha (z-z_k)} \tfrac{1}{n} \int_{|\xi|=1} \sigma_{-2\alpha z + 2\alpha z_k -n}(\xi; z) d \omega(\xi)
\end{equation}
with $\omega$ the surface measure on the unit sphere in $\R^n$. Compared to (2.10) in \cite{maniccia2013determinants} all terms with averaging in the $x$-direction are omitted, the finite-part integral vanishes anyway on operators with symbols independent of $x$.
 
\end{proof}

Based on the singularity structure of the regularized zeta-function we can give the short-term asymptotics of the generalized heat trace.

\begin{proof}[Proof of Theorem \ref{thm_heat_trace_noninteger_order}]
This is an application of Theorem 4.3 in \cite{maniccia2013determinants}. It relies on a mathematical folklore written up in Section 5 of \cite{grubb1996zeta}. The singularity structure of the generalized heat trace is related via the Mellin transform to $\Gamma(z) \zeta(z)$ so that we need to consider the pole structure of $\Gamma(z) \zeta(z)$. Indeed, for $2 \alpha$ rational
\begin{equation} \label{eq_gamma_zeta}
\Gamma(z) \zeta(z) \sim \sum_{k =0}^\infty \tfrac{c_k}{z-(n-k)/2 \alpha} + \sum_{l=1}^\infty \tfrac{\tilde{c}_l}{[z-(-l)]^2}
\end{equation}
with coefficients $c_k$ and $\tilde{c}_l$ as in the assertion. We have double poles where the poles of $\Gamma(z)$ and $\zeta(z)$ coincide which can only occur for $(n-k)/2 \alpha$ a negative integer denoted by $l$ in equation \eqref{eq_gamma_zeta}.

For $2 \alpha$ irrational, $\Gamma(z) \zeta(z)$ has only simple poles
$\Gamma(z) \zeta(z) \sim \sum_{k=0}^\infty \tfrac{c_k}{z-(n-k)/2 \alpha}$
with $c_k$ as given. 
\end{proof}

\section{Worked example: the relativistic stable L\'evy process} \label{sec_example}
To illustrate the approach and to show that it is analytically tractable we consider the Bernstein function $f(\lambda)=\sqrt{\lambda+1}-1$ in dimensions $n=2$ and $n=3$. The generator of this process is usually denoted by $-H$ where $H=(-\Delta+1)^{1/2}-1$ is the relativistic Hamiltonian with zero potential and mass 1 and $\Delta$ is the Laplace operator. The behaviour of the regularized zeta function and the generalized heat trace is different in these dimensions with the appearance of logarithmic terms in dimension 3.

\subsection{Dimension-independent considerations}
We first derive several properties of the Bernstein function and the associated pseudodifferential operator that are independent of the dimension $n$. \\

\textbf{Asymptotic expansion of the Bernstein function.} The L\'evy density of the Bernstein function is given by $m(t)=\tfrac{1}{2 \sqrt{\pi}} t^{-3/2} e^{-t}$ as $\Gamma(1/2)=\sqrt{\pi}$. From the Taylor series for $e^{-t}$ near $t=0$ we find
\[
m(t) \sim \tfrac{1}{2 \sqrt{\pi}} t^{-3/2} \left(1-t+ \tfrac{1}{2} t^2-\tfrac{1}{3!} t^3 \pm \cdots \right).
\]
In our previous notation this yields $\alpha = -\tfrac{1}{2}$ and $p_k = \tfrac{(-1)^k}{k!} \tfrac{1}{2 \sqrt{\pi}}$ for $k=0, 1, 2, \ldots$

We apply Proposition \ref{prop_growth} (\ref{ass_expansion}) to find the asymptotics of $f$. With 
$\overline{m}(t)=m(t)-\tfrac{1}{2 \sqrt{\pi}} t^{-3/2}$
we find $\overline{m}(0, \infty) = -1$. 
Thus, 
\[
f(\lambda) \sim \lambda^{1/2} -1 + \tfrac{1}{2} \lambda^{-1/2} - \tfrac{1}{8} \lambda^{-3/2} + \tfrac{1}{16} \lambda^{-5/2} \mp \cdots
\]
which agrees with the Taylor expansion of $f$ in $1/\lambda$. \\

\textbf{The associated pseudodifferential operator.} The associated pseudodifferential operator $A$ has the symbol $\sigma(\xi)=(|\xi|^2+1)^{1/2}-1$. We consider the shifted operator $\tilde{A}=A+I$ with symbol
\[
\tilde\sigma(\xi)=f(|\xi|^2)+1 \sim |\xi| + \tfrac{1}{2} |\xi|^{-1} - \tfrac{1}{8} |\xi|^{-3} + \tfrac{1}{16} |\xi|^{-5} \mp \cdots
\]
which is a classical symbol in $S_{cl}^{1,0}(\R^2)$. 

Let $\Lambda$ be the sector $\Lambda=\left\{z \in \C \middle| \theta \leq \arg(z) \leq 2\pi-\theta\right\}$ for a $\theta \in (\pi/4, \pi/2)$. A quick calculation shows that the symbol is $\Lambda$-elliptic, i.e. that $\tilde\sigma(\xi)$ does not take values in $\Lambda$ and $\tfrac{1}{\sqrt{2}} \langle \xi \rangle \leq |\lambda-\tilde\sigma(\xi)|$ for all $\lambda \in \Lambda$, $\xi \in \R^2$. \\

\textbf{Parameter-dependent parametrix.} For the parameter-dependent parametrix we apply the algorithm of the proof of Theorem \ref{thm_generator_as_pseudo} and find
\begin{align*}
b_{-1}(\xi, \lambda) & = \frac{1}{\lambda- |\xi|}\\
b_{-3}(\xi, \lambda) & = \frac{\tfrac{1}{2} |\xi|^{-1}}{(\lambda-|\xi|)^2}\\
b_{-5}(\xi, \lambda) & = \frac{\left(\tfrac{1}{2} |\xi|^{-1} \right)^2}{(\lambda-|\xi|)^3}-\frac{\tfrac{1}{8} |\xi|^{-3}}{(\lambda-|\xi|)^2},
\end{align*}
which agrees with \eqref{eq_explicit_parametrix}.
Putting everything together we arrive at 
\[
b(\xi, \lambda) \sim \frac{1}{\lambda- |\xi|} + \frac{\tfrac{1}{2} |\xi|^{-1}}{(\lambda-|\xi|)^2} + \left[\frac{\left(\tfrac{1}{2} |\xi|^{-1} \right)^2}{(\lambda-|\xi|)^3}-\frac{\tfrac{1}{8} |\xi|^{-3}}{(\lambda-|\xi|)^2} \right] + \cdots
\]
for the parameter-dependent parametrix. \\

\textbf{Symbol expansion of the heat operator.} The first few terms in the asymptotic expansion of the symbol of the heat operator are given by 
\[
\sigma\left(e^{-t\tilde{A}}\right)(\xi) \sim e^{-t |\xi|} + \left[-\tfrac{1}{2} |\xi|^{-1} + \tfrac{1}{8} |\xi|^{-3} \right] t e^{-t |\xi|} + \tfrac{1}{8} |\xi|^{-2}t^2 e^{-t |\xi|} + \cdots
\]
in the lowest orders. \\

\textbf{The regularized zeta function.} The lowest-order terms in the asymptotic expansion of the complex powers is given by
\begin{equation} \label{eq_as_exp_zeta_example}
\sigma\left(\tilde{A}^{-z}\right)(\xi) \sim |\xi|^{-z} -\tfrac{1}{2} z |\xi|^{-z-2} + \tfrac{1}{8}(z^2+2z) |\xi|^{-z-4} + \cdots 
\end{equation}
for the complex powers.

\subsection{Dimension $n=2$}
The regularized zeta function has at most simple poles at the points $z_k=2-k$ for $k=0, 1, 2, \ldots$ and the residues are given in terms of the homogeneous symbol of order $-n=-2$. Let $\omega$ be the surface measure on the unit sphere in $\R^2$. 

\begin{enumerate}
\item[$k=0$:] near $z_0=2-0=2$ we find that
\[
\zeta(z) = \tfrac{(2\pi)^{-2}}{-2+z} \tfrac{1}{2} \int_{|\xi|=1} |\xi|^{-2} d \omega(\xi) = \tfrac{1}{4 \pi} \cdot \tfrac{1}{z-2}
\]
\item[$k=1$:] near $z_1=2-1=1$ we see that $\zeta(z)$ is analytic as the asymptotic expansion \eqref{eq_as_exp_zeta_example} contains no homogeneous term of order $-2$.
\item[$k=2$:] near $z_2=2-2=0$, the homogeneous term of order $2$ is given by $-\tfrac{1}{2} z |\xi|^{-z-2}$ which vanishes at $z_2=0$ so that $\zeta(z)$ is analytic there.
\item[$k=3$:] near $z_3=2-3=-1$ we find that $\zeta(z)$ is analytic as the asymptotic expansion \eqref{eq_as_exp_zeta_example} contains no homogeneous term of order $-2$.
\item[$k=4$:] near $z_4=2-4=-2$ we have
\[
\zeta(z) = \tfrac{(2\pi)^{-2}}{2+z} \tfrac{1}{2} \int_{|\xi|=1} \tfrac{1}{8}\left[z^2+2 z\right]_{z=-2} |\xi|^{-2} d \omega(\xi) = 0,
\]
so that $\zeta(z)$ is analytic there.
\end{enumerate}
Hence to lowest orders the only pole of $\zeta(z)$ is at $z=2$ with residue $\tfrac{1}{4 \pi}$. \\

\textbf{Heat trace expansion.} We finally compute the coefficients of the heat trace expansion. The ansatz is to write 
\[
\Gamma(z) \zeta(z) = \sum_{k=0}^\infty \left[\tfrac{c_k}{z-(2-k)} + \tfrac{\tilde{c}_k}{\left(z-(2-k)\right)^2}\right] + g(z)
\]
where $g$ is an entire function which is of no concern to us. The $\Gamma$-function has only simple poles located at $z_k=-k$ for $k=0, 1, 2, \ldots$ with residue $\res_{z=-k} \Gamma(z)=\tfrac{(-1)^k}{k!}$. An explicit computation yields the following:
\begin{align*}
c_0 & = \res_{z=2} \Gamma(z) \zeta(z) = \Gamma(2) \cdot \tfrac{1}{4 \pi}=\tfrac{1}{4 \pi} \\
c_1 & = 0 \\
c_2 & =1 \cdot \zeta(0) \\
c_3 & = \res_{z=-1} \Gamma(z) \zeta(z) = \tfrac{(-1)^1}{1!} \cdot \zeta(-1) = - \zeta(-1) \\
c_4 & = \res_{z=-2} \Gamma(z) \zeta(z) = \tfrac{(-1)^2}{2!} \cdot \zeta(-2)=\tfrac{1}{2} \zeta(-2)
\end{align*}
with $\tilde{c}_0=\tilde{c}_1=\tilde{c}_2=\tilde{c}_3=\tilde{c}_4=0$ as there are no double poles of $\Gamma(z) \zeta(z)$ at these points.

Hence the desired heat trace expansion for $\tilde A$ reads
\[
TR\left(e^{-t \tilde A} \right) \sim \tfrac{1}{4 \pi} t^{-2} + \zeta(0) - \zeta(-1) t^{1} +\tfrac{1}{2} \zeta(-2) t^{2}  + \cdots
\]
in agreement with \eqref{eq_lowest_coeffs_heat_trace}. Note that the heat trace expansion of the relativistic L\'evy process $TR\left(e^{-t A} \right)$ can be obtained via the relation $TR\left(e^{-t A} \right)=e^{t} TR\left(e^{-t \tilde A} \right)$ as $\tilde A=A+I$ so that
\begin{equation*}
TR\left(e^{-t A} \right) \sim \tfrac{1}{4 \pi} e^t t^{-2} + \zeta(0) e^t - \zeta(-1) e^t t^{1} +\tfrac{1}{2} \zeta(-2) e^t t^{2}  + \cdots.
\end{equation*}

In dimension $n=2$, the zeta-function is regular at $-k$ for $k=0, 1, 2, \ldots$ (and so there are no logarithmic terms in the heat trace expansion) as $n=1/\alpha$, i.e. there is a algebraic relation between the dimension $n$ and the parameter $\alpha$. 

\subsection{Dimension $n=3$}
The regularized zeta function has at most simple poles at the points $z_k=3-k$ for $k=0, 1, 2, \ldots$ with the following residues. Let $\omega$ be the surface measure on the unit sphere in $\R^3$.

\begin{enumerate}
\item[$k=0$:] near $z_0=3-0=3$ we find that
\[
\zeta(z) = \tfrac{(2\pi)^{-3}}{-3+z} \cdot \tfrac{1}{3} \int_{|\xi|=1} |\xi|^{-3} d \omega(\xi) = \tfrac{1}{6 \pi^2} \cdot \tfrac{1}{z-3}
\]
\item[$k=1$:] near $z_1=3-1=2$, the function $\zeta(z)$ is analytic.
\item[$k=2$:] near $z_2=3-2=1$, we obtain
\[
\zeta(z) = \tfrac{(2\pi)^{-3}}{-1+z} \cdot \tfrac{1}{3} \int_{|\xi|=1} (-\tfrac{1}{2}) \cdot 1 \cdot |\xi|^{-3} d \omega(\xi) = -\tfrac{1}{12 \pi^2} \cdot \tfrac{1}{z-1}
\]
\item[$k=3$:] near $z_3=3-3=0$, the function $\zeta(z)$ is analytic.
\item[$k=4$:] near $z_4=3-4=-1$ we find that
\[
\zeta(z) = \tfrac{(2\pi)^{-3}}{1+z} \cdot\tfrac{1}{3} \int_{|\xi|=1} \tfrac{1}{8}\left[(-1)^2+2 \cdot (-1)\right] |\xi|^{-3} d \omega(\xi) = -\tfrac{1}{48 \pi^2} \cdot \tfrac{1}{z-(-1)}.
\]
\end{enumerate}

\textbf{Heat trace expansion.} Explicit computations as above yield the following
\begin{align*}
c_0 & = \res_{z=3} \Gamma(z) \zeta(z) = \Gamma(3) \cdot \tfrac{1}{6 \pi^2} =\tfrac{1}{3 \pi^2} \\
c_1 & = 0 \\
c_2 & = \res_{z=1} \Gamma(z) \zeta(z) = \Gamma(1) \cdot \left(-\tfrac{1}{12 \pi^2}\right) = -\tfrac{1}{12 \pi^2} \\
c_3 & = \res_{z=0} \Gamma(z) \zeta(z) = 1 \cdot \zeta(0) = \zeta(0) \\
c_4 & = 0.
\end{align*}
Also, $\tilde c_0 = \tilde{c}_1 = \tilde{c}_2 = \tilde{c}_3 =0$ and due to the double pole of $\Gamma(z) \zeta(z)$ at $z_4=-1$ 
\[
\tilde{c}_4 = \res_{z=-1} \Gamma(z) \zeta(z) = \tfrac{(-1)^1}{1!} \cdot \left(-\tfrac{1}{48 \pi^2}\right)=\tfrac{1}{48 \pi^2}
\]
The desired heat trace expansion for $A$ reads
\[
TR\left(e^{-t A} \right) \sim \tfrac{1}{3 \pi^2} e^t t^{-3} -\tfrac{1}{12 \pi^2} e^t t^{-1} + \zeta(0) e^t - \tfrac{1}{48 \pi^2} e^t t \log t   + \cdots,
\]
for the relativistic stable L\'evy process in $\R^3$. 

\begin{remark} \label{rem_banuelos_trace}
We remark that \cite{banuelos2014two} and \cite{park2014trace} consider a related case when an $\alpha$-stable relativistic  process moves in a bounded region $D$ in Euclidean space of dimension $n$. The authors assume different degrees of regularity of the boundary $\partial D$. In our notation, the authors consider the subordinator $f(\lambda)=(\lambda+c^{1/\alpha})^\alpha-c$.

For $c=1$ the two lowest orders of the heat trace are given in \cite{banuelos2014two} by
\begin{equation} \label{eq_asymptotics_banuelos}
\vol(D) C_1(t) e^t t^{-n/2\alpha} -C_2(t) \vol(\partial D),
\end{equation}
and in \cite{park2014trace} by
\begin{equation} \label{eq_asymptotics_song}
\vol(D) C_1' \left(\sum_{k=0}^l \tfrac{1}{k!} t^k\right) t^{-n/2\alpha} -C_2' \vol(\partial D) t^{-(n-1)/2\alpha},
\end{equation}
where $l$ is the largest integer less than $1/\alpha$. Ein either case, $\vol(D)$ the volume of $D$ with respect to Lebesgue measure and $\vol(\partial D)$ the surface area. 

In \eqref{eq_asymptotics_banuelos}, $C_1$ and $C_2$ are functions such that 
\begin{equation} \label{eq_banuelos_trace}
C_1(t) \to \tfrac{\Omega_n \Gamma(n/2\alpha)}{(2\pi)^n 2\alpha}
\end{equation}
as $t \to 0$ and $C_2(t)$ is bounded in the order of $e^{2t} t^{-(n-1)/2\alpha}$. Both $C_1$ and $C_2$ can be expressed in terms of probability densities and first exit times, respectively. In \eqref{eq_asymptotics_song}, $C_1'$ is a constant given by $C_1'=\frac{\Omega_n \Gamma(n/2\alpha)}{(2\pi)^n 2\alpha}$. The constant $C_2'$ reflects the boundary geometry of $D$.

Note that the limit in \eqref{eq_banuelos_trace} (and hence $C_1'$) is exactly the coefficient $c_0$ in our heat trace expansion \eqref{eq_lowest_coeffs_heat_trace} for the given subordinator. 
\end{remark}

\textbf{Acknowledgement.} The author thanks D. Applebaum and R. Ba{\~n}uelos for several helpful exchanges. The author is also indebted to E. Schrohe and S. Weber for cogent remarks on an earlier version of the manuscript and for fruitful discussions.

\bibliographystyle{plain}
\bibliography{Bibliography}

\begin{thebibliography}{10}

\bibitem{albeverio2001analytic}
S.~Albeverio, B.~R{\"u}diger, and J.-L. Wu.
\newblock Analytic and {P}robabilistic {A}spects of {L}{\'e}vy {P}rocesses and
  {F}ields in {Q}uantum {T}heory.
\newblock In O.E. Barndorff-Nielsen, T.~Mikosch, and S.I. Resnick, editors,
  {\em L\'evy {P}rocesses: {T}heory and {A}pplications}, pages 187--224.
  Birkh\"auser, Boston, 2001.

\bibitem{applebaum2011infinitely}
D.~Applebaum.
\newblock Infinitely divisible central probability measures on compact {L}ie
  groups -- regularity, semigroups and transition kernels.
\newblock {\em Ann. Prob.}, 39(6):2474--2496, 2011.

\bibitem{applebaum2011pseudo}
D.~Applebaum.
\newblock Pseudo-differential operators and {M}arkov semigroups on compact
  {L}ie groups.
\newblock {\em J. Math. Anal. Appl.}, 384:331--348, 2011.

\bibitem{applebaum2014probability}
D.~Applebaum.
\newblock {\em Probability on Compact Lie Groups}, volume~70 of {\em
  Probability Theory and Stochastic Modelling}.
\newblock Springer-Verlag, Cham, 2014.

\bibitem{banuelos2013trace}
R.~Ba{\~n}uelos and F.~Baudoin.
\newblock Trace asymptotics for subordinate semigroups.
\newblock {\em arXiv:1308.4944}, 2013.

\bibitem{banuelos2008trace}
R.~Ba{\~n}uelos and T.~Kulczycki.
\newblock Trace estimates for stable processes.
\newblock {\em Prob. Theory Relat. Fields}, 142:313--338, 2008.

\bibitem{banuelos2014two}
R.~Ba{\~n}uelos, J.B. Mijena, and E.~Nane.
\newblock Two-term trace estimates for relativistic stable processes.
\newblock {\em J. Math. Anal. Appl.}, 410(2):837--846, 2014.

\bibitem{barndorff1997processes}
O.E. Barndorff-Nielsen.
\newblock Processes of {N}ormal {I}nverse {G}aussian type.
\newblock {\em Finance Stoch.}, 2(1):41--68, 1997.

\bibitem{bertoin1998levy}
J.~Bertoin.
\newblock {\em L{\'e}vy Processes}, volume 121 of {\em Cambridge Tracts in
  Mathematics}.
\newblock Cambridge University Press, Cambridge, 1998.

\bibitem{bleistein1975asymptotic}
N.~Bleistein and R.A. Handelsman.
\newblock {\em Asymptotic Expansions of Integrals}.
\newblock Dover, New York, 1986.

\bibitem{blumenthal1961sample}
R.M. Blumenthal and R.K. Getoor.
\newblock Sample functions of stochastic processes with stationary independent
  increments.
\newblock {\em J. Math. Mech.}, 10(3):493--516, 1961.

\bibitem{boggiatto1996global}
P.~Boggiatto, E.~Buzano, and L.~Rodino.
\newblock {\em Global Hypoellipticity and Spectral Theory}, volume~92 of {\em
  Mathematical Research}.
\newblock Akademie Verlag, Berlin, 1996.

\bibitem{connes1994noncommutative}
A.~Connes.
\newblock {\em Noncommutative Geometry}.
\newblock Academic Press, San Diego, 1994.

\bibitem{cordes1995technique}
H.O. Cordes.
\newblock {\em The Technique of Pseudodifferential Operators}, volume 202 of
  {\em London Mathematical Society Lecture Note Series}.
\newblock Cambridge University Press, Cambridge, 1995.

\bibitem{egorov1997pseudo}
Yu.V. Egorov and B.-W. Schulze.
\newblock {\em Pseudo-differential Operators, Singularities, Applications},
  volume~93 of {\em Operator Theory Advances and Applications}.
\newblock Birkh\"auser, Basel, 1997.

\bibitem{engel2000one}
K.-J. Engel and R.~Nagel.
\newblock {\em One-Parameter Semigroups for Linear Evolution Equations}, volume
  194 of {\em Graduate Texts in Mathematics}.
\newblock Springer, New York, 2000.

\bibitem{grubb1996zeta}
G.~Grubb and R.T. Seeley.
\newblock Zeta and eta functions for {A}tiyah-{P}atodi-{S}inger operators.
\newblock {\em J. Geom. Anal.}, 6(1):31--77, 1996.

\bibitem{jacob2001pseudo}
N.~Jacob.
\newblock {\em Pseudo-Differential Operators \& Markov Processes: Fourier
  Analysis and Semigroups}, volume~1.
\newblock Imperial College Press, London, 2001.

\bibitem{jacob2002pseudo}
N.~Jacob.
\newblock {\em Pseudo-Differential Operators \& Markov Processes: Generators
  and Their Potential Theory}, volume~2.
\newblock Imperial College Press, London, 2002.

\bibitem{jacob2005pseudo}
N.~Jacob.
\newblock {\em Pseudo-Differential Operators \& Markov Processes: Markov
  Processes and Applications}, volume~3.
\newblock Imperial College Press, London, 2005.

\bibitem{kontsevich1994determinants}
M.~Kontsevich and S.~Vishik.
\newblock Determinants of elliptic pseudo-differential operators.
\newblock {\em arXiv:hep-th/9404046}, 1994.

\bibitem{kontsevich1995geometry}
M.~Kontsevich and S.~Vishik.
\newblock Geometry of determinants of elliptic operators.
\newblock In {\em Functional Analysis on the Eve of the 21st Century}, volume
  131 of {\em Progress in Mathematics}. Birkh\"auser, 1995.

\bibitem{maniccia2006complex}
L.~Maniccia, E.~Schrohe, and J.~Seiler.
\newblock Complex powers of classical {SG}-pseudodifferential operators.
\newblock {\em Ann. Univ. Ferrara Sez. VII Sci. Mat.}, 52(2):353--369, 2006.

\bibitem{maniccia2013determinants}
L.~Maniccia, E.~Schrohe, and J.~Seiler.
\newblock Determinants of classical {SG}-pseudodifferential operators.
\newblock {\em Math. Nachr.}, 287(7):782--802, 2014.

\bibitem{nicola2010global}
F.~Nicola and L.~Rodino.
\newblock {\em Global Pseudo-Differential Calculus on Euclidean Spaces},
  volume~4 of {\em Pseudodifferential Operators Theory and Applications}.
\newblock Birkh\"auser, Basel, 2010.

\bibitem{park2014trace}
H.~Park and R.~Song.
\newblock Trace estimates for relativistic stable processes.
\newblock {\em Potential Anal.}, 41(4):1273--1291, 2014.

\bibitem{pruitt1981growth}
W.E. Pruitt.
\newblock The growth of random walks and {L}{\'e}vy processes.
\newblock {\em Ann. Prob.}, 9(6):948--956, 1981.

\bibitem{schilling1998growth}
R.L. Schilling.
\newblock Growth and {H}\"older conditions for the sample paths of {F}eller
  processes.
\newblock {\em Prob. Theory Relat. Fields}, 112:565--611, 1998.

\bibitem{schilling2012bernstein}
R.L. Schilling, R.~Song, and Z.~Vondracek.
\newblock {\em Bernstein Functions: Theory and Applications}, volume~37 of {\em
  Studies in Mathematics}.
\newblock Walter de Gruyter, Berlin, second edition, 2012.

\bibitem{schmudgen2012unbounded}
K.~Schm{\"u}dgen.
\newblock {\em Unbounded Self-Adjoint Operators on Hilbert Space}, volume 265
  of {\em Graduate Texts in Mathematics}.
\newblock Springer-Verlag, Dordrecht, 2012.

\bibitem{scott2010traces}
S.~Scott.
\newblock {\em Traces and Determinants of Pseudodifferential Operators}.
\newblock Oxford Mathematical Monographs. Oxford University Press, Oxford,
  2010.

\bibitem{shubin2001pseudodifferential}
M.A. Shubin.
\newblock {\em Pseudodifferential Operators and Spectral Theory}, volume 200.
\newblock Springer-Verlag, Berlin, second edition, 2001.

\bibitem{wodzicki1987noncommutative}
M.~Wodzicki.
\newblock Noncommutative residue. {I}. {F}undamentals.
\newblock In Y.~Manin, editor, {\em K-Theory, Arithmetic and Geometry}, volume
  1289 of {\em Lecture Notes in Mathematics}, pages 320--399. Springer-Verlag,
  Berlin, 1987.

\end{thebibliography}

\end{document}